\title[Profinitely solitary Chevalley groups]{Galois cohomology and profinitely solitary Chevalley groups}
 \author[H. Kammeyer]{Holger Kammeyer}
 \author[R. Spitler]{Ryan Spitler}
 \address{Mathematical Institute, University of D{\"u}sseldorf, Germany}
 \email{holger.kammeyer@hhu.de}
 \address{Department of Mathematics, Rice University, Texas, USA}
 \email{rfs8@rice.edu}
\subjclass[2010]{22E40, 20E18, 11E72}
\keywords{profinite rigidity, arithmetic groups, Galois cohomology}
\theoremstyle{plain}
\newtheorem{theorem}{Theorem}
\newtheorem{proposition}[theorem]{Proposition}
\theoremstyle{definition}
\newtheorem*{definition*}{Definition}
\newtheorem*{observation*}{Observation}
\providecommand{\ignore}[1]{}
\providecommand{\R}{\mathbb{R}}
\providecommand{\Q}{\mathbb{Q}}
\providecommand{\Z}{\mathbb{Z}}
\providecommand{\C}{\mathbb{C}}
\newcommand*{\arXiv}[1]{ \href{http://www.arxiv.org/abs/#1}{arXiv:\textbf{#1}}}
\begin{document}

\begin{abstract}
For every number field and every Cartan Killing type, there is an associated split simple algebraic group. We examine whether the corresponding arithmetic subgroups are profinitely solitary so that the commensurability class of the profinite completion determines the commensurability class of the group among finitely generated residually finite groups.  Assuming Grothendieck rigidity, we essentially solve the problem by Galois cohomological means.
\end{abstract}

\maketitle

\section{Introduction}

A finitely generated residually finite group \(\Gamma\) is called \emph{(absolutely) profinitely solitary} if every other finitely generated residually finite group \(\Delta\) with \(\widehat{\Delta} \approx \widehat{\Gamma}\) satisfies \(\Delta \approx \Gamma\).  Here ``\(\approx\)'' means being abstractly commensurable and \(\widehat{\Gamma}\) is the profinite completion of \(\Gamma\).  The purpose of this article is to examine which arithmetic subgroups of split simple algebraic groups (sometimes known as \emph{Chevalley groups}) are profinitely solitary.  To this end, let \(k\) be a number field and denote the ring of finite adeles of \(k\) by \(\mathbb{A}^f_k\).  We say \(k\) is \emph{locally determined} if every number field \(l\) with \(\mathbb{A}^f_l \cong \mathbb{A}^f_k\) satisfies \(l \cong k\).

\begin{theorem} \label{thm:main-theorem}
Let \(k\) be a locally determined number field and let \(\mathbf{G}\) be a simply-connected absolutely almost simple split linear \(k\)-group such that 
      \begin{enumerate}[label=(\roman*)]
      \item \label{item:totima} \(k\) is totally imaginary and \(\mathbf{G}\) is not of type \(A_1\) if \(k\) is imaginary quadratic, or
      \item \label{item:a2n+1} \(k\) has precisely one real place, and either \(\mathbf{G}\) has type \(A_{2n+1}\) with \(n \ge 0\) (\(n \ge 1\) for \(k = \Q\)) or \(\mathbf{G}\) has type \(C_n\) with \(n \ge 2\), or
      \item \label{item:a2n} \(k\) is arbitrary and \(\mathbf{G}\) has type \(A_{2n}\) with \(n \ge 1\), or
      \item \label{item:qforms} \(k = \Q\) and \(\mathbf{G}\) has type \(B_3\), \(B_4\), \(D_4\), \(D_5\), or \(G_2\).
      \item \label{item:a1} \(\mathbf{G}\) has type \(A_1\) and \(k\) has signature \((2,0)\), \((3,0)\), or \((2,1)\).
  \end{enumerate}
Then either all arithmetic subgroups of \(\mathbf{G}\) are profinitely solitary, or there exists an arithmetic subgroup of \(\mathbf{G}\) that has a proper Grothendieck subgroup.
\end{theorem}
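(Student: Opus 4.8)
I would establish the dichotomy by showing that if no arithmetic subgroup of $\mathbf{G}$ has a proper Grothendieck subgroup, then every arithmetic subgroup $\Gamma\le\mathbf{G}(k)$ is profinitely solitary. So let $\Delta$ be finitely generated residually finite with $\widehat{\Delta}\approx\widehat{\Gamma}$; replacing both groups by finite-index subgroups (harmless for the commensurability classes) I may assume $\widehat{\Delta}\cong\widehat{\Gamma_0}$ for some arithmetic $\Gamma_0\le\mathbf{G}(k)$, and it is enough to deduce $\Delta\approx\Gamma_0$. In every case at hand $\Gamma_0$ has the congruence subgroup property --- classically in higher rank, and for type $A_1$ by Serre's work on $\mathrm{SL}_2$ over fields of $S$-rank at least two, which is exactly why item~(v) is restricted to the signatures $(2,0)$, $(3,0)$, $(2,1)$. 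Thus $\widehat{\Gamma_0}$ is the congruence completion, an open compact subgroup of $\mathbf{G}(\mathbb{A}^f_k)$ commensurable with $\prod_{v\nmid\infty}\mathbf{G}(\mathcal{O}_v)$ by strong approximation, and reading off its finite quotients place by place recovers the multiset $\{k_v:v\nmid\infty\}$ of non-archimedean completions together with the Cartan--Killing type; since $k$ is locally determined this pins down $k$.

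Next I would locate $\Delta$ inside an algebraic group. The adelic structure of $\widehat{\Delta}\cong\widehat{\Gamma_0}$ together with Margulis superrigidity and arithmeticity --- applied to a Zariski-dense representation of a finite-index subgroup of $\Delta$ built from its action on the finite quotients of $\widehat{\Delta}$ --- produces a simply-connected absolutely almost simple $k$-group $\mathbf{H}$, an arithmetic subgroup $\Lambda\le\mathbf{H}(k)$, and a subgroup $\Lambda'\le\Lambda$ isomorphic to a finite-index subgroup $\Delta'\le\Delta$ compatibly with the profinite completions. Comparing completions as above, $\mathbf{H}$ is locally isomorphic to $\mathbf{G}$ at every finite place, so the $*$-action of the Galois group is trivial on almost all decomposition subgroups, hence trivial; thus $\mathbf{H}$ is an inner form of $\mathbf{G}$ that is split at every finite place. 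If $\mathbf{H}\cong\mathbf{G}$ then $\Lambda\approx\Gamma_0$ by Margulis, and $\Lambda'$ is a finitely generated subgroup of the arithmetic group $\Lambda$ with the same profinite completion as $\Lambda$; by the standing hypothesis $\Lambda'$ is not a proper Grothendieck subgroup of $\Lambda$, hence has finite index, and $\Delta\approx\Delta'\cong\Lambda'\approx\Lambda\approx\Gamma_0$, as wanted. It therefore remains to exclude the case $\mathbf{H}\not\cong\mathbf{G}$, which is where Galois cohomology enters.

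So assume $\mathbf{H}$ is an inner form of $\mathbf{G}$, split at every finite place, with $\mathbf{H}\not\cong\mathbf{G}$. Since $\widehat{\Lambda}\cong\widehat{\Gamma_0}$ is the congruence completion of an arithmetic group, $\Lambda$ has the congruence subgroup property (forcing $\mathbf{H}$ to have archimedean $S$-rank at least two), and in particular $\mathbf{H}$ is isotropic at some archimedean place while non-split at some real place. The isomorphism class of $\mathbf{H}$ lies in the kernel of $H^1(k,\mathbf{G}^{\mathrm{ad}})\to\prod_{v\nmid\infty}H^1(k_v,\mathbf{G}^{\mathrm{ad}})$, which I would analyse through the central extension $1\to\mathbf{Z}\to\mathbf{G}\to\mathbf{G}^{\mathrm{ad}}\to1$ for the centre $\mathbf{Z}$: the Hasse principle $H^1(k,\mathbf{G})\xrightarrow{\sim}\prod_{v\mid\infty}H^1(k_v,\mathbf{G})$ for the simply-connected $\mathbf{G}$; the identification, by Kummer theory, of $H^2(k,\mathbf{Z})$ with torsion in $\mathrm{Br}(k)$; the vanishing of $\mathrm{Sha}^2(k,\mathbf{Z})$ by Poitou--Tate duality and Chebotarev; and the reciprocity constraint $\sum_v\mathrm{inv}_v=0$ on archimedean invariants coming from Albert--Brauer--Hasse--Noether. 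This forces any nontrivial such $\mathbf{H}$ to be ramified at an even number of archimedean places, and one then checks type by type that under each hypothesis no $\mathbf{H}$ with these properties exists: for $k$ totally imaginary there is no archimedean cohomology at all and the kernel vanishes, giving~(i) (with $A_1$ over an imaginary quadratic field excluded as the rank-one exception); for $A_{2n}$ the archimedean local cohomology vanishes over every $k$, giving~(iii); when $k$ has a single real place the one archimedean invariant is forced trivial, covering $A_{2n+1}$ and $C_n$ and hence~(ii); and over $\mathbb{Q}$ a direct computation with quadratic forms shows that for $B_3$, $B_4$, $D_4$, $D_5$ and for $G_2$ every inner form split at all finite places is either split or anisotropic at the real place, giving~(iv) and the remaining part of~(v) --- the anisotropic alternative being impossible here since it would make $\Lambda$ finite. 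Hence $\mathbf{H}\cong\mathbf{G}$, and the argument closes.

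I expect the main obstacle to be twofold. Structurally, the delicate step is the recognition of the second paragraph: manufacturing from the abstract isomorphism $\widehat{\Delta}\cong\widehat{\Gamma_0}$ a genuine arithmetic group $\Lambda$ and a profinitely compatible comparison map needs the full strength of superrigidity, and it is exactly the present inability to rule out a proper Grothendieck subgroup of $\Lambda$ that forces the second alternative into the statement --- this is the sense in which the result is conditional on Grothendieck rigidity. Computationally, the burden is the type-by-type Galois cohomology of the third paragraph: the vanishing assertions genuinely fail for the excluded pairs --- for instance, for $A_{2n+1}$ with $n\ge1$ over a field with two real places one may ramify a central division algebra of degree $2n+2$ at exactly those two real places and at no finite place, obtaining a companion that is profinitely but not abstractly commensurable with $\Gamma_0$ --- so the analysis must be carried out case by case, and such examples show the restrictions in (i)--(v) cannot simply be dropped.
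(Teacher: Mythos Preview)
Your overall architecture matches the paper's: reduce to locating $\Delta$ inside an arithmetic $\Lambda\le\mathbf{H}(k)$ via the recognition theorem (the paper cites Spitler's thesis for this), then analyse which inner forms $\mathbf{H}$ of $\mathbf{G}$ can be split at every finite place using the long exact sequence for $1\to Z(\mathbf{G})\to\mathbf{G}\to\operatorname{Ad}\mathbf{G}\to1$, the Hasse principle for $\mathbf{G}$, and Brauer-group reciprocity. For cases (i)--(iii) this is exactly the paper's argument, packaged there as the ``finite splitting principle'' (Theorem~\ref{thm:finite-splitting-principle}).

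There is, however, a genuine factual error in your handling of (iv) and (v). You assert that for $B_3,B_4,D_4,D_5,G_2$ over $\mathbb{Q}$ every inner form split at all finite places is ``either split or anisotropic at the real place''. This is \emph{false} for $B_4$ and $D_5$: the kernel also contains the rank-one real form $\operatorname{Spin}(8,1)$ (type $B_4$) respectively $\operatorname{Spin}(9,1)$ (type $D_5$). The same defect hits type $A_1$ in signatures $(3,0)$ and $(2,1)$: the nontrivial kernel classes there are quaternion algebras ramified at exactly two real places, so $\Lambda$ is commensurable with an arithmetic Fuchsian respectively Kleinian group---again rank one, not anisotropic. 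Your attempted rescue, that $\Lambda$ must have CSP because $\widehat{\Lambda}\cong\widehat{\Gamma_0}$, is circular: you have not established $\widehat{\Lambda}\cong\widehat{\Gamma_0}$ (only a profinitely compatible embedding of $\Delta'$), and even granting it, inheriting CSP from a profinite twin is precisely what one cannot assume. The paper's actual argument in these cases is different and indispensable: one exploits that $\Delta$ sits Zariski-densely in such a rank-one lattice and invokes \emph{virtual retraction onto geometrically finite subgroups} (Bergeron--Haglund--Wise for $\operatorname{SO}(n,1;\mathbb{Z})$; virtual fibering or half-lives-half-dies for Kleinian groups; surface/free subgroup for Fuchsian groups) to force $b_1(\Delta_0)>0$ for some finite-index $\Delta_0$, contradicting the profinitely visible vanishing of virtual $b_1$ for the CSP group $\Gamma_0$. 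This geometric input is the missing ingredient in your plan.
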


Recall that \(\Delta \le \Gamma\) is called a \emph{Grothendieck subgroup} if the inclusion induces an isomorphism \(\widehat{\Delta} \cong \widehat{\Gamma}\).  It is a more than 30 year old open problem whether higher rank arithmetic groups are \emph{Grothendieck rigid}, meaning they do not have proper Grothendieck subgroups~\cite{Platonov-Rapinchuk:algebraic-groups}*{p.\,434}.

To put the theorem into context, let us recall that replacing commensurability ``\(\approx\)'' with isomorphism ``\(\cong\)'' in the definition of profinite solitude yields the more established notion of \emph{profinite rigidity}.  The most prominent open cases of profinite rigidity are probably the noncommutative free groups \(F_n\) and the groups \(\operatorname{SL}_n(\mathbb{Z})\).  In this direction, and building on his joint work~\cite{Bridson-et-al:absolute} with Bridson--McReynolds--Reid, the second author proved in~\cite{Spitler:profinite} that for \(n \ge 3\), either \(\operatorname{SL}_n(\mathbb{Z})\) is profinitely rigid or not Grothendieck rigid.

Roughly, this is done by establishing that if a finitely generated residually finite group \(\Delta\) satisfies \(\widehat{\Delta} \cong \widehat{\Gamma}\) for an arithmetic group \(\Gamma\) of a higher rank simple \(k\)-group \(\mathbf{G}\), then \(\Delta\) embeds in an arithmetic subgroup \(\Lambda\) of an \(l\)-group \(\mathbf{H}\) such that \(\mathbb{A}^f_k \cong \mathbb{A}^f_l\), \(\mathbf{G}(\mathbb{A}^f_k) \cong \mathbf{H}(\mathbb{A}^f_l)\), and \(\widehat{\Gamma} \cong \widehat{\Lambda}\).  If \(\Gamma = \operatorname{SL}_n (\mathbb{Z})\) in \(\mathbf{G} = \mathbf{SL_n}\), one can conclude that \(k= \Q = l\) and invoke the classification of central simple algebras to determine that \(\mathbf{H} = \mathbf{SL_n}\) and \(\Lambda = \operatorname{SL}_n (\mathbb{Z})\).

\medskip
To conclude similarly for a general \(k\)-split group \(\mathbf{G}\), we would firstly need to know that \(k\) is locally determined to obtain an isomorphism \(\sigma \colon l \rightarrow k\).  We then have \(\mathbf{G}(\mathbb{A}^f_k)  \cong {}^\sigma \mathbf{H}(\mathbb{A}^f_k)\) which shows that \({}^\sigma \mathbf{H}\) splits at all finite places of \(k\).  From this we would secondly need to conclude that \({}^\sigma\mathbf{H}\) splits over \(k\).  This would give \({}^\sigma \mathbf{H} \cong_k \mathbf{G}\) by uniqueness of the \(k\)-split form, hence \(\Gamma \approx \Lambda\).  This second task can be described as verifying a ``Hasse local-global principle for being \(k\)-split'' with infinite places omitted.  In Galois cohomological terms, the task is to show that the diagonal map
  \[ g \colon H^1(k, \operatorname{Aut} \mathbf{G}) \longrightarrow \prod_{v \nmid \infty} H^1(k_v, \operatorname{Aut} \mathbf{G}) \]
  has trivial kernel.  When this occurs, we shall say the \(k\)-group \(\mathbf{G}\) satisfies the \emph{finite splitting principle}.  Identifying the cases in which this principle holds is the main technical result of this article.

\begin{theorem} \label{thm:finite-splitting-principle}
  Let \(\mathbf{G}\) be a simply-connected absolutely almost simple split linear \(k\)-group.  Then \(\mathbf{G}\) satisfies the finite splitting principle if and only if
    \begin{enumerate}[label=(\roman*)]
  \item \(k\) is totally imaginary and \(\mathbf{G}\) is arbitrary, or
  \item \(k\) has precisely one real place and \(\mathbf{G}\) has type \(A_{2n+1}\) or \(C_n\), or
  \item \(k\) is arbitrary and \(\mathbf{G}\) has type \(A_{2n}\). 
  \end{enumerate}
\end{theorem}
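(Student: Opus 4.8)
The plan is to analyze the diagonal map
\[ g \colon H^1(k, \operatorname{Aut} \mathbf{G}) \longrightarrow \prod_{v \nmid \infty} H^1(k_v, \operatorname{Aut} \mathbf{G}) \]
by comparing it with the full Hasse map that includes the infinite places, for which the local-global principle is known. Since $\mathbf{G}$ is split, $\operatorname{Aut}\mathbf{G}$ is the semidirect product of the adjoint group $\overline{\mathbf{G}}$ by the (constant) group $\operatorname{Sym}$ of diagram automorphisms, and inner forms are governed by $H^1(k,\overline{\mathbf{G}})$ while the quotient $H^1(k,\operatorname{Sym})$ classifies the $*$-action, i.e. the possible splitting fields / outer twists. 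So the first step is to recall the exact sequence of pointed sets
\[ H^1(k,\overline{\mathbf{G}}) \longrightarrow H^1(k,\operatorname{Aut}\mathbf{G}) \longrightarrow H^1(k,\operatorname{Sym}) \]
and to treat the outer part and the inner part separately, fibering $H^1(k,\operatorname{Aut}\mathbf{G})$ over $H^1(k,\operatorname{Sym})$ and twisting.

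Second, for the outer part: an element of $H^1(k,\operatorname{Sym})$ that dies at every finite place corresponds to an étale $k$-algebra (a cyclic or quadratic or $S_3$-extension depending on type: $\mathbb{Z}/2$ for types $A_{n\ge2}$, $D_{n\ne4}$, $E_6$; $S_3$ for $D_4$; trivial otherwise) that is split at all finite places. By Chebotarev / class field theory, a nontrivial finite extension of $k$ cannot be split at every finite place — every nontrivial extension has infinitely many non-split primes. Hence $g$ restricted to the outer part is injective \emph{on the level of $\operatorname{Sym}$}, and the only way $g$ can have nontrivial kernel on an outer class is through its inner part after twisting. This is where the hypotheses enter: for $k$ totally imaginary there is nothing left to check at infinity, while for one real place the surviving obstruction at the real place must be analyzed type by type. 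The key observation is that the difference between $g$ and the full Hasse map is exactly the product of the local maps at the real places; so $\ker g$ is nontrivial iff there is a class that is trivial at all finite places but \emph{nontrivial} at some real place, and by the Hasse principle for $H^1$ of adjoint (and simply connected) groups and for $\operatorname{Aut}\mathbf{G}$, such a class must be locally trivial at all finite places and have a nonzero ``sum'' of real invariants. Concretely, one reduces to: does $H^1(\mathbb{R}, \operatorname{Aut}\mathbf{G}_{\mathbb{R}})$ receive a nontrivial class that is the restriction of a global class unramified everywhere else? This is controlled by the Galois cohomology of the center and by real forms: for type $A_{2n}$ the relevant invariants (Brauer classes of central simple algebras of odd degree, and the discriminant data) are forced to be trivial at $\mathbb{R}$ automatically — an odd-degree division algebra over $\mathbb{R}$ is split and there are no nontrivial real outer forms of $A_{2n}$ — which is why $A_{2n}$ works over arbitrary $k$; for $A_{2n+1}$ and $C_n$ the real forms that could obstruct are themselves detected by a single $\mathbb{Z}/2$ invariant (a Hasse-symbol / discriminant) whose global reciprocity forces it to vanish when there is only one real place; for the remaining types ($B_n$, $D_n$, $E_7$, $E_8$, $G_2$, $F_4$, and $A_1$ which is $B_1=C_1$) there genuinely exist anisotropic real forms giving rise to everywhere-finitely-unramified nontrivial global classes, so the principle fails unless $k$ is totally imaginary.

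Third, to make the ``only if'' direction precise I would, for each excluded case, exhibit an explicit nontrivial element of $\ker g$: a $k$-form of $\mathbf{G}$ that is split at every finite place but not globally split. For $k$ with at least one real place and $\mathbf{G}$ not of type $A_{2n+1}$ or $C_n$ (and $\mathbf{G}\ne A_{2n}$), one builds such a form using the local-global existence theorems for algebraic groups (Borel--Harder, or the fact that $H^1(k,\overline{\mathbf{G}})\to\prod_v H^1(k_v,\overline{\mathbf{G}})$ is surjective onto the classes summing to zero): prescribe the split form at every finite place and a suitable non-split (e.g. anisotropic or appropriately twisted) form at the real places, subject only to the product condition, which can be met because there is freedom at the real places — this is possible precisely when the real form contributes a class that cannot be ``undone'' by the finite places, i.e. exactly in the types where $H^1(\mathbb{R})$ for the relevant (adjoint or automorphism) group is nontrivial in a way not seen by the center's reciprocity. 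For $k$ with at least two real places and $\mathbf{G}$ of type $A_{2n+1}$ or $C_n$, one uses the two real places to violate the single reciprocity relation, again producing a nontrivial kernel element. The main obstacle, and the part requiring genuine case analysis, is the bookkeeping at the real places: correctly computing $H^1(\mathbb{R},\operatorname{Aut}\mathbf{G}_{\mathbb{R}})$ for each Cartan--Killing type, tracking how the fibration over $H^1(k,\operatorname{Sym})$ interacts with twisting (so that the ``inner'' fibers over a nontrivial outer class are themselves computed after twisting $\overline{\mathbf{G}}$), and checking that the reciprocity obstruction from the center of the simply connected group (which enters via the coboundary $H^1(k,\overline{\mathbf{G}})\to H^2(k,Z)$ and the known kernel of $H^2(k,Z)\to\prod H^2(k_v,Z)$) is the \emph{only} global constraint. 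Once that is in hand, matching the surviving list against the three stated cases is a finite check; and the application to Theorem~\ref{thm:main-theorem} follows by feeding the finite splitting principle into the argument sketched in the introduction, intersecting the list with the places-at-infinity constraints already imposed there and discarding type $A_1$ over imaginary quadratic fields (where $\operatorname{SL}_2$ of the ring of integers is not arithmetically higher rank) as noted.
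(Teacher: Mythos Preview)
Your overall architecture matches the paper's: decompose via \(H^1(k,\operatorname{Ad}\mathbf{G}) \to H^1(k,\operatorname{Aut}\mathbf{G}) \to H^1(k,\operatorname{Sym}\Delta)\), kill the outer part with Chebotarev, and analyze the inner part through \(\delta^1\colon H^1(k,\operatorname{Ad}\mathbf{G}) \to H^2(k,Z(\mathbf{G}))\) and Brauer reciprocity. But the case analysis you sketch contains actual errors, not just imprecision, and they stem from not isolating the right invariant.

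For the ``if'' direction with one real place and type \(C_n\), the claim that the obstructing forms are ``detected by a single \(\mathbb{Z}/2\) invariant'' is false: the inner real forms \(\operatorname{Sp}(p,q)\) all share the \emph{same} nontrivial \(\delta^1_{\mathbb{R}}\)-image, so that invariant does not separate them from one another. Reciprocity forces \(\delta^1(\alpha)=0\) for \(\alpha\in\ker f\), whence \(\alpha\in\operatorname{im}\pi^1\); what actually finishes the argument is the independent fact \(H^1(\mathbb{R},\mathbf{SL}_m)=H^1(\mathbb{R},\mathbf{Sp}_{2n})=1\), which via the Hasse principle for simply-connected groups gives \(H^1(k,\mathbf{G})=1\) and hence \(\operatorname{im}\pi^1=\{1\}\). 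The paper makes exactly this reduction (Propositions~\ref{prop:ker-equals-ker}--\ref{prop:ker-pi-trivial}): once \(b\) is injective, \(\ker g\) is in bijection with \(H^1(k,\mathbf{G})\cong\prod_{v\mid\infty}H^1(k_v,\mathbf{G})\), and the theorem reduces to reading off \(H^1(\mathbb{R},\mathbf{G})\) from a table. Relatedly, \(A_1=A_{2\cdot 0+1}\) belongs under case~(ii), not in your failing list. For the ``only if'' direction, type \(E_6\) is absent from your enumeration of ``remaining types'', and your heuristic ``use the anisotropic real form'' does not handle it: the compact form of \(E_6\) is an \emph{outer} form, while the nontrivial class one needs in \(\ker f\) with a single real place must lie in \(\operatorname{im}\pi^1_{\mathbb{R}}\), i.e.\ come from \(H^1(\mathbb{R},\mathbf{G})\), and corresponds to the rank-two form \(E_{6(-26)}\). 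The same subtlety bites for \(E_7\): the compact form has nontrivial \(\delta^1_{\mathbb{R}}\), so with one real place reciprocity forbids globalizing it with trivial finite part; the usable class is \(E_{7(-25)}\). (A smaller slip: there \emph{are} real outer forms of \(A_{2n}\), namely \(\operatorname{SU}(p,q)\) with \(p+q=2n+1\); what you need, and what is true, is that there are no nontrivial \emph{inner} real forms, since \(\operatorname{Br}(\mathbb{R})_{2n+1}=0\).)
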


From this theorem, the first three cases of Theorem~\ref{thm:main-theorem} follow easily.  The small adaptations for type \(A_1\) in \ref{item:totima} and \ref{item:a2n+1} are in place to ensure \(\mathbf{G}\) has the \emph{congruence subgroup property} (CSP) which is an important assumption in~\cite{Spitler:profinite}.  CSP is also one of the reasons we are studying the concept of profinite solitude instead of rigidity because for a totally imaginary number field~\(k\), CSP does not hold in the strict sense.

Though Theorem~\ref{thm:finite-splitting-principle} is an ``if and only if'' statement, the first three cases of Theorem~\ref{thm:main-theorem} do not quite make up the entire list of profinitely solitary higher rank split arithmetic groups (assuming Grothendieck rigidity.)  For if \(k\) has few real places and \(\ker g\) is nontrivial, it might occur that all nontrivial elements in \(\ker g\) correspond to arithmetic groups which are low rank lattices.  This situation can be exploited in the last two cases of Theorem~\ref{thm:main-theorem} to still give the main result.   On the other hand, there remain some questions about type \(A_1\) for other signatures of \(k\) and type \(F_4\) for \(k = \Q\).  These questions are exclusively related to the incomplete status of Serre's conjecture on CSP as we address below.  Apart from these issues, Theorem~\ref{thm:main-theorem} gives the complete list of profinitely solitary Chevalley groups assuming Grothendieck rigidity:

  \begin{theorem} \label{thm:converse}
    Let \(\mathbf{G}\) be a simply-connected absolutely almost simple split linear \(k\)-group.  Assume that \(k\) is not locally determined or that \(k\) is locally determined and none of the cases \ref{item:totima}-\ref{item:a1} of Theorem~\ref{thm:main-theorem} applies.  Assume moreover \(\mathbf{G}\) is not of type \(A_1\), and not of type \(F_4\) if \(k=\Q\).  Then no arithmetic subgroup of \(\mathbf{G}\) is profinitely solitary.
  \end{theorem}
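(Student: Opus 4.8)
The plan is to exhibit, for a single arithmetic subgroup $\Gamma = \mathbf{G}(\mathcal{O}_k)$ (one suffices, since profinite solitude is invariant under abstract commensurability and all arithmetic subgroups of $\mathbf{G}$ are commensurable), a witness to the failure of solitude: an arithmetic subgroup $\Lambda$ of a simply-connected absolutely almost simple $l$-group $\mathbf{H}$ with \emph{infinite} arithmetic subgroups, together with an isomorphism of topological rings $\mathbb{A}^f_l \cong \mathbb{A}^f_k$ inducing $\mathbf{H}(\mathbb{A}^f_l) \cong \mathbf{G}(\mathbb{A}^f_k)$, such that no field isomorphism $l \to k$ carries $\mathbf{H}$ to $\mathbf{G}$. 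Granting that $\mathbf{G}$ and $\mathbf{H}$ both have the congruence subgroup property with finite congruence kernel, this does the job. Indeed, $\widehat{\Gamma}$ and $\widehat{\Lambda}$ are then finite central extensions of the closures of $\Gamma$, resp.\ $\Lambda$, inside the isomorphic locally compact groups $\mathbf{G}(\mathbb{A}^f_k) \cong \mathbf{H}(\mathbb{A}^f_l)$; picking for a profinite group with finite central $C$ an open normal subgroup meeting $C$ trivially shows each of $\widehat\Gamma,\widehat\Lambda$ is commensurable with that closure, and any two open compact subgroups of a locally compact group are commensurable, so $\widehat{\Gamma} \approx \widehat{\Lambda}$. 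On the other hand $\Gamma \not\approx \Lambda$: an abstract commensuration of two higher rank arithmetic lattices forces, via Margulis's arithmeticity and superrigidity theorems, an isomorphism of the ambient algebraic groups over an isomorphism of the defining fields. For $\mathbf{G}$ the congruence subgroup property in this strong form is Matsumoto's theorem, as $\mathbf{G}$ is $k$-split and, not being of type $A_1$, of rank at least $2$; it is for reasons tied to the congruence subgroup property that the types $A_1$ (where the property fails for $\mathbf{G}$ itself) and, over $\Q$, $F_4$ are excluded, cf.\ the introduction.

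To build $(\mathbf{H},l)$ I distinguish the two halves of the hypothesis. If $k$ is not locally determined, fix $l \ncong k$ with $\mathbb{A}^f_l \cong \mathbb{A}^f_k$ -- such $l$ exists by definition -- and let $\mathbf{H}$ be the split $l$-group of the same Cartan--Killing type as $\mathbf{G}$. Then $\mathbf{H}(\mathbb{A}^f_l) \cong \mathbf{G}(\mathbb{A}^f_l) \cong \mathbf{G}(\mathbb{A}^f_k)$: the first isomorphism because $\mathbf{G}$ and $\mathbf{H}$ arise by base change from one and the same Chevalley group scheme over $\Z$, the second because the ring isomorphism induces one on $\mathbf{G}$-points. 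Here $\mathbf{H}$ is $l$-split of rank at least $2$, so it has infinite arithmetic subgroups and, by Matsumoto again, the congruence subgroup property; and no field isomorphism identifies $\mathbf{H}/l$ with $\mathbf{G}/k$, precisely because $l \ncong k$.

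If instead $k$ is locally determined and none of the cases \ref{item:totima}--\ref{item:a1} of Theorem~\ref{thm:main-theorem} applies, then -- because $\mathbf{G}$ is not of type $A_1$ -- none of the three cases of Theorem~\ref{thm:finite-splitting-principle} applies either, so the finite splitting principle fails and $\ker g$ is nontrivial. I take $l = k$ and $\mathbf{H} = {}^{\xi}\mathbf{G}$ for a suitably chosen nontrivial class $\xi \in \ker g \subseteq H^1(k, \operatorname{Aut}\mathbf{G})$. Since $\xi \in \ker g$, the group $\mathbf{H}$ splits at every finite place of $k$ and the twist is unramified outside a finite set, whence $\mathbf{H}(\mathbb{A}^f_k) \cong \mathbf{G}(\mathbb{A}^f_k)$ (the local isomorphisms at the finitely many ramified archimedean places being irrelevant, and extending to the integral models at almost all finite places). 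Moreover $\mathbf{H}$ is not $k$-isomorphic to $\mathbf{G}$: the $k$-forms of $\mathbf{G}$ are classified by $H^1(k, \operatorname{Aut}\mathbf{G})$ and $\xi \neq 1$; more precisely, for $\sigma \in \operatorname{Aut}(k)$ one has ${}^{\sigma}\mathbf{H} = {}^{\sigma(\xi)}\mathbf{G}$ with $\sigma(\xi) \neq 1$, as $\sigma$ acts bijectively on $H^1(k, \operatorname{Aut}\mathbf{G})$. The word ``suitably'' means that $\xi$ is chosen so that $\mathbf{H}$ is isotropic at some archimedean place of $k$ -- so that its arithmetic subgroups are infinite -- and so that the $S_\infty$-rank of $\mathbf{H}$ (its rank over the product of archimedean completions) is at least $2$, ensuring the congruence subgroup property by the established part of Serre's conjecture. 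The existence of such a $\xi$ outside the excluded configurations is to be read off from the explicit description of $\ker g$ obtained in the course of proving Theorem~\ref{thm:finite-splitting-principle}.

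I expect the main obstacle to be precisely this last point: establishing that, outside the excluded configurations, $\ker g$ always contains a nontrivial class twisting $\mathbf{G}$ into a group isotropic at an archimedean place and of $S_\infty$-rank at least $2$. The cases where this fails are exactly those in which every nontrivial class of $\ker g$ produces a form anisotropic at all archimedean places -- a form whose arithmetic subgroups are finite and hence witness nothing -- and for $k = \Q$ these are the types in case \ref{item:qforms} of Theorem~\ref{thm:main-theorem} together with $F_4$ (compare $G_2$ over $\Q$, where $\ker g$ consists only of the split form and the totally anisotropic form). Adding type $A_1$, where $\mathbf{G}$ itself lacks the congruence subgroup property, one recovers the list of excluded cases, while over all other fields the cohomological analysis -- through classes coming from $H^1(k, \mathbf{G})$, or for inner forms from Brauer classes supported at archimedean places -- always supplies a nontrivial $\xi \in \ker g$ whose twist is archimedean-isotropic of sufficient $S_\infty$-rank. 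Thus the dichotomy falling this way is essentially the content of the computations behind Theorem~\ref{thm:finite-splitting-principle}, and the only additional work is to record, type by type, which nonsplit classes in $\ker g$ realize such groups.
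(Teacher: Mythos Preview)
Your overall strategy is the paper's: exhibit a simply-connected absolutely almost simple $l$-group $\mathbf{H}$ with $\mathbf{H}(\mathbb{A}^f_l)\cong\mathbf{G}(\mathbb{A}^f_k)$, finite congruence kernel, and not isomorphic to $\mathbf{G}$ over any field isomorphism; CSP plus strong approximation then give $\widehat{\Gamma}\approx\widehat{\Lambda}$ while superrigidity gives $\Gamma\not\approx\Lambda$. The case where $k$ is not locally determined is handled exactly as in the paper.

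There is one genuine gap in the locally determined case. You assert that $S_\infty$-rank at least $2$ ``ensur[es] the congruence subgroup property by the established part of Serre's conjecture.'' That is not established: CSP remains open for $k$-anisotropic groups even of high $S_\infty$-rank (the paper itself flags anisotropic $A_1$-forms as a notorious instance). The paper therefore verifies in every case that the chosen $\mathbf{H}$ is $k$-\emph{isotropic}, so that Raghunathan's theorems apply. For the orthogonal types $B_n,D_n$ this is immediate from the explicit description $\mathbf{H}=\operatorname{Spin}(p,q;k)$ with $p,q\ge 1$; for the exceptional types and for $A_{2n+1}$ the paper invokes the Prasad--Rapinchuk criterion that an inner twist of the split form, isotropic at sufficiently many places, is globally isotropic; for $C_n$ the fiber is arranged so that the $k$-rank is at least two. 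Your argument needs $k$-isotropy of $\mathbf{H}$, not merely archimedean rank, together with a verification of it.

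On the type-by-type analysis you flag as the main obstacle: Section~\ref{section:converse} does more than reuse the computations behind Theorem~\ref{thm:finite-splitting-principle}. That proof only produces \emph{some} nontrivial class in $\ker g$; here one must control its archimedean behaviour. For the exceptional types this means identifying, via the Adams--Ta\"ibi tables, exactly which real forms lie in $\operatorname{im}\pi^1_\R$ (one finds $E_{6(-26)}$ and $E_{7(-25)}$, both of real rank $\ge 2$). For $A_{2n+1}$ and $C_n$ with at least two real places, $H^1(k,\mathbf{G})$ is trivial, so the needed class is not in $\operatorname{im}\pi^1$ at all but is a $\delta^1$-preimage of a Brauer class in $\ker b$; and for $C_n$ the paper further analyzes the fiber $(\delta^1)^{-1}(\delta^1(\alpha))$ via twisted cohomology to realize isotropic real forms $\operatorname{Sp}(p,q)$ with $p,q\ge 1$ at the two specified places. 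So there is genuine cohomological work beyond Theorem~\ref{thm:finite-splitting-principle}, and your remark that it is ``essentially the content of the computations behind'' that theorem undersells what remains to be done.
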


  For completeness, let us say what we expect for the cases not yet covered by either Theorem~\ref{thm:main-theorem} or~\ref{thm:converse}.  According to a well known conjecture by Serre, all higher rank lattices should have CSP.  Anisotropic \(A_1\)-forms constitute a notorious open case of this conjecture.  But if the conjecture does hold in this case, then all type \(A_1\) Chevalley groups over number fields of signature different from \((2,0), (3,0), (2,1)\) are not profinitely solitary because we find anisotropic higher rank forms in \(\ker g\).

  That leaves type \(F_4\) over \(\Q\) as the only case where the outcome is up for debate.  In that case, \(\ker g\) contains a lattice in the rank one Lie group \(F_{4(-20)}\).  Serre had originally conjectured that such lattices should not have CSP.  But it was meanwhile discovered that those lattices show ``higher rank behavior'' in other respects: property \((T)\), superrigidity, and arithmeticity.  One might take this as evidence that they should have CSP, too~\cite{Lubotzky:non-arithmetic}*{Section~4}.  If that is true, then Chevalley groups of type \(F_4\) over \(\Q\) are not profinitely solitary.  If on the other hand, lattices in \(F_{4(-20)}\) and their Zariski dense subgroups do not have CSP, then Chevalley groups of type \(F_4\) over \(\Q\) fall under the conclusion of Theorem~\ref{thm:main-theorem}.

  To make things explicit, we list some examples of solitary and non-solitary Chevalley groups, illustrating that the question of profinite solitude is more intricate than anticipated.

\bigskip
\noindent \textbf{Examples.} The following Chevalley groups are either profinitely solitary or not Grothendieck rigid:
  \begin{align*}
    A_n \colon & \operatorname{SL}_2(\Z[\sqrt{2}]), \ \operatorname{SL}_n(\mathbb{Z}) \text{ for } n \ge 3, \ \operatorname{SL}_3(\mathbb{Z}[\sqrt{7}]), \ \operatorname{SL}_4(\mathbb{Z}[\sqrt[3]{2}]), \\
    B_n \colon & \operatorname{Spin}(3,4)(\Z), \ \operatorname{Spin}(4,5)(\Z), \\
    C_n \colon & \operatorname{Sp}_n(\mathbb{Z}) \text{ and } \operatorname{Sp}_n(\mathbb{Z}[\sqrt[3]{2}]) \text{ for } n \ge 2, \\
    D_n \colon & \operatorname{Spin}(4,4)(\Z), \ \operatorname{Spin}(5,5)(\Z). \\
    \textup{Exceptional} \colon &  G_2(\Z). \\
    \intertext{In contrast, the following Chevalley groups are not profinitely solitary:}
    A_n \colon & \operatorname{SL}_3(\Z[\sqrt[8]{7}]), \ \operatorname{SL}_4(\Z[\sqrt{2}]), \\
    B_n \colon & \operatorname{Spin}(5 + n,6 + n)(\mathbb{Z}) \text{ for } n \ge 0, \\
    C_n \colon & \operatorname{Sp}_n(\Z[\sqrt{2}]) \text{ and } \operatorname{Sp}_n(\Z[\sqrt[4]{2}]) \text{ for } n \ge 2, \\
    D_n \colon & \operatorname{Spin}(6 + n,6 + n)(\mathbb{Z}) \text{ for } n \ge 0. \\
    \textup{Exceptional} \colon & E_6(\Z), \ E_7(\mathbb{Z}), \ E_8(\mathbb{Z}), \ F_4(\Z[\sqrt{2}]), \ G_2(\Z[\sqrt{2}]).
  \end{align*} 

\medskip
We point out that in~\cite{Kammeyer:absolute} and with S.\,Kionke in~\citelist{\cite{Kammeyer-Kionke:adelic-superrigidity} \cite{Kammeyer-Kionke:rigidity}}, the first author previously examined profinite solitude for arithmetic lattices in \emph{simple} Lie groups.  This entails strong restrictions on \(\mathbf{G}\) and \(k\), as \(\mathbf{G}\) must be anisotropic at all but one infinite place.  Hence these lattices are \emph{cocompact} unless \(k = \Q\).  Chevalley groups, in contrast, now provide many examples and counterexamples of the opposite nature: they are \emph{noncocompact} irreducible lattices in \emph{semisimple} Lie groups and these Lie groups are not simple unless \(k = \Q\) or \(k = \Q(\sqrt{-d})\).  Let us also remark that in \cite{Kammeyer:absolute}*{Theorem~3}, it was shown that some real form of type \(E_7\) has a non-profinitely solitary non-cocompact lattice.  But the precise real form was not identified, as the argument used the pigeonhole principle.  The proof of Theorem~\ref{thm:converse} now reveal that such a lattice exists in \(E_{7(7)}\) and \(E_{7(-25)}\).

Finally, we want to stress that while we saw that many Chevalley groups are profinitely solitary, this by no means implies that one should expect that all groups in the commensurability class of a Chevalley group are profinitely rigid.  In fact, they typically are not.  In the interesting recent preprint~\cite{Weiss:non-rigidity}, A.\,Y.\,Weiss Behar has compiled various methods to construct counterexamples to profinite rigidity within a commensurability class that nicely complement this work.

The outline of this article is as follows. Section~\ref{section:galois} collects necessary background on the Galois cohomology of simple algebraic groups.  Sections~\ref{section:finite-splitting-principle}, \ref{section:main-theorem}, and \ref{section:converse} prove Theorems~\ref{thm:finite-splitting-principle}, \ref{thm:main-theorem} and~\ref{thm:converse}, respectively.  Section~\ref{section:converse} also contains some explanations about the above list of examples.

\medskip
The first author acknowledges financial support from the German Research Foundation within the Priority Program ``Geometry at Infinity'', DFG 441848266, and the Research Training Group ``Algebro-Geometric Methods in Algebra, Arithmetic, and Topology'', DFG 284078965.  The second author acknowledges financial support from the N.S.F. Postdoctoral Fellowship, DMS-2103335.   We wish to thank Skip Garibaldi, Steffen Kionke, Benjamin Klopsch, and Alan Reid for helpful discussions.  Part of this work was completed while the first author visited Rice University, and he is grateful for
the department's hospitality.

\section{Galois cohomology of simple algebraic groups} \label{section:galois}

Let \(k\) be a number field and let \(\mathbf{G}\) be a simply-connected absolutely almost simple \(k\)-split linear algebraic group.  Choose a maximal \(k\)-split torus \(\mathbf{S} \subset \mathbf{G}\) and pick a set of simple roots \(\Delta \subset \Phi(\mathbf{G}, \mathbf{S})\) in the corresponding root system of \(\mathbf{G}\).  The set \(\Delta\) determines a unique Weyl chamber and hence a unique \(k\)-defined Borel subgroup \(\mathbf{B} \subset \mathbf{G}\) containing \(\mathbf{S}\)~\cite{Borel:linear-algebraic}*{Proposition~13.10\,(2), p.\,170}.  The Dynkin diagram symmetries define a subgroup \(\operatorname{Sym} \Delta\) of the permutation group of the set \(\Delta\).  Let \(\operatorname{Ad} \mathbf{G} = \mathbf{G} / Z(\mathbf{G})\) be the group of inner automorphisms of \(\mathbf{G}\).  Then we have a split short exact sequence
\begin{equation} \label{eq:inn-out} 1 \rightarrow \operatorname{Ad} \mathbf{G} \rightarrow \operatorname{Aut} \mathbf{G} \rightarrow \operatorname{Sym} \Delta \rightarrow 1. \end{equation}
There is a canonical splitting that lifts \(\operatorname{Sym} \Delta\) to the subgroup of those \(k\)-automorphisms of \(\mathbf{G}\) that fix \(\mathbf{S}\) and the Borel subgroup \(\mathbf{B} \subset \mathbf{G}\), cf.\,\cite{Platonov-Rapinchuk:algebraic-groups}*{p.\,77}.  Fix an algebraic closure \(\overline{k}/k\).  The exact sequence and the splitting evaluated at the field extension \(\overline{k}/k\) are equivariant with respect to the action of the profinite group \(\operatorname{Gal}(k) = \operatorname{Gal}(\overline{k}/k)\) where \(\operatorname{Gal}(k)\) acts trivially on \(\operatorname{Sym} \Delta\).  From this, we obtain a commutative diagram of pointed Galois cohomology sets with split exact rows so that \(i\) and \(I\) have trivial kernel while \(j\) and \(J\) are surjective~\cite{Serre:galois-cohomology}*{Prop.~38, p.\,51}.  We refer to it as the \emph{first fundamental diagram}:
\[
\begin{tikzcd}[row sep=7mm, column sep=5mm]
\textstyle\prod \limits_{v \nmid \infty} H^1(k_v, \operatorname{Ad} \mathbf{G}) \arrow{r}{I} & \textstyle\prod\limits_{v \nmid \infty} H^1(k_v, \operatorname{Aut} \mathbf{G}) \arrow{r}{J} & \textstyle\prod\limits_{v \nmid \infty} H^1(k_v, \operatorname{Sym} \Delta) \arrow[bend left=10]{l}{S} \\
{H^1(k, \operatorname{Ad} \mathbf{G})} \arrow{r}{i} \arrow{u}{f}                    & {H^1(k, \operatorname{Aut} \mathbf{G})} \arrow{r}{j} \arrow{u}{g}                    & {H^1(k, \operatorname{Sym} \Delta)}. \arrow{u}{h} \arrow[bend left=10]{l}{s} 
\end{tikzcd}
\]
To set up the diagram, we used that for group valued functor \(A\) defined on the category of algebraic field extensions of \(k\), Galois cohomology \(H^1(k, A)\) is functorial in \(k\), provided \(A\) satisfies certain compatibilities~\cite{Serre:galois-cohomology}*{Section~II.1.1}.  Note that this is a little subtle because there are no functorial models for the algebraic closure \(\overline{k}/k\).  However, different isomorphisms \(\overline{k} \cong \overline{k}'\) induce conjugate isomorphisms \(\operatorname{Gal}(\overline{k}/k) \cong \operatorname{Gal}(\overline{k}'/k)\).  Therefore, we obtain a unique isomorphism \(H^1(\overline{k}/k, A(\overline{k}/k)) \cong H^1(\overline{k}'/k, A(\overline{k}'/k))\) of pointed sets.

Now consider the short exact sequence
\begin{equation} 1 \longrightarrow \operatorname{Z}(\mathbf{G}) \longrightarrow \mathbf{G} \longrightarrow \operatorname{Ad} \mathbf{G} \longrightarrow 1 \end{equation}
which describes a nontrivial central extension of \(\operatorname{Gal}(k)\)-groups.  It induces a long exact sequence in Galois cohomology
  \begin{align} \label{eq:les}
    \begin{split}
      1  \xrightarrow{\phantom{\delta^0}} Z(\mathbf{G})(k) & \xrightarrow{\iota^0} \mathbf{G}(k) \xrightarrow{\pi^0} (\operatorname{Ad} \mathbf{G})(k) \xrightarrow{\delta^0} H^1(k, Z(\mathbf{G})) \xrightarrow{\iota^1} \\
      & \xrightarrow{\iota^1} H^1(k, \mathbf{G}) \xrightarrow{\pi^1} H^1(k, \operatorname{Ad} \mathbf{G}) \xrightarrow{\delta^1} H^2(k, Z(\mathbf{G}))
    \end{split}        
  \end{align}
  as in \cite{Serre:galois-cohomology}*{Proposition~43, p.\,55}.  From naturality of this sequence, we obtain a \emph{second fundamental diagram} in which the map \(f\) occurs:
  \[
\begin{tikzcd}
\bigoplus\limits_{v \nmid \infty} H^1(k_v, \mathbf{G}) \arrow[r, "\oplus_v\, \pi^1_v"] & \bigoplus\limits_{v \nmid \infty} H^1(k_v, \operatorname{Ad} \mathbf{G}) \arrow[r, "\oplus_v\, \delta^1_v"] & \bigoplus\limits_{v \nmid \infty} H^2(k_v, \operatorname{Z} (\mathbf{G})) \\
H^1(k, \mathbf{G}) \arrow[u] \arrow[r, "\pi^1"] & H^1(k, \operatorname{Ad} \mathbf{G}) \arrow[u, "f"] \arrow[r, "\delta^1"] & H^2(k, \operatorname{Z}(\mathbf{G})). \arrow[u, "b"]
\end{tikzcd}
\]
A priori, the rows are only exact at the middle term.  Note that in contrast to the first fundamental diagram, we use the symbol \(\bigoplus\) instead of \(\prod\) because all vertical maps have image in the subsets of the products consisting of elements with only finitely many nontrivial coordinates.  We point out that this interpretation of ``\(\bigoplus\)'' is neither the product nor the coproduct in the category of pointed sets.

\section{Proof of Theorem~\ref{thm:finite-splitting-principle}} \label{section:finite-splitting-principle}

We begin the proof of Theorem~\ref{thm:finite-splitting-principle} with some propositions as preparation.  Recall the notation from the first fundamental diagram.

\begin{proposition} \label{prop:h-trivial-kernel}
  The maps \(h\) and \(g_{|\operatorname{im} s}\) have trivial kernel.
\end{proposition}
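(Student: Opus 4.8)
The plan is to derive both claims from the fact that $\operatorname{Sym}\Delta$ is a \emph{finite constant} $\operatorname{Gal}(k)$-group, together with a standard consequence of the Chebotarev density theorem. I would prove the statement about $h$ first, and then deduce the statement about $g_{|\operatorname{im} s}$ from it using the commutativity of the first fundamental diagram.

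\emph{Triviality of $\ker h$.} Since $\operatorname{Gal}(k)$ acts trivially on the finite group $\operatorname{Sym}\Delta$ --- which is $1$, $\Z/2\Z$, or $S_3$ --- the pointed set $H^1(k,\operatorname{Sym}\Delta)$ is the set of $\operatorname{Sym}\Delta$-conjugacy classes of continuous homomorphisms $\operatorname{Gal}(k)\to\operatorname{Sym}\Delta$, with the trivial homomorphism as base point, and likewise $H^1(k_v,\operatorname{Sym}\Delta)$ is described by the decomposition group $\operatorname{Gal}(k_v)$ (well defined up to conjugacy). Under these identifications $h$ sends the class of $\phi$ to the tuple of classes of the restrictions $\phi|_{\operatorname{Gal}(k_v)}$. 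If $[\phi]\in\ker h$, then for every finite place $v$ the homomorphism $\phi|_{\operatorname{Gal}(k_v)}$ is conjugate to, hence equal to, the trivial homomorphism. Writing $L=\overline{k}^{\,\ker\phi}$ for the finite Galois extension of $k$ cut out by $\phi$, this says precisely that every finite place of $k$ splits completely in $L$; as the completely split places of a finite Galois extension have density $1/[L:k]$, we conclude $L=k$, so $\phi$ is trivial and $\ker h$ is trivial.

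\emph{Triviality of $\ker(g_{|\operatorname{im} s})$.} Here I would invoke the commutativity of the square of the first fundamental diagram built from the two canonical splittings: because $s$ and $S$ are both induced by the group homomorphism $\operatorname{Sym}\Delta\to\operatorname{Aut}\mathbf{G}$ and this is compatible with the base change from $k$ to $k_v$, one has $g\circ s = S\circ h$; recall also $J\circ S=\operatorname{id}$. Given $\alpha\in\operatorname{im} s$ with $g(\alpha)$ trivial, write $\alpha=s(\beta)$ for some $\beta\in H^1(k,\operatorname{Sym}\Delta)$. Then $S(h(\beta))=g(s(\beta))=g(\alpha)$ is the base point, so applying $J$ shows $h(\beta)$ is trivial, whence $\beta$ is trivial by the first part and therefore $\alpha=s(\beta)$ is trivial as well.

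I do not expect a genuine obstacle: the content is simply that a $\operatorname{Sym}\Delta$-torsor over $k$ amounts to a finite \'etale $k$-algebra, and such an algebra is trivial once all but finitely many primes of $k$ split it. The one point that needs a little care is the pointed-set bookkeeping --- that ``trivial kernel'' refers only to the preimage of the base point, and that the sections $s$ and $S$ are base-point preserving and compatible with localization --- but this is immediate from the construction of the first fundamental diagram.
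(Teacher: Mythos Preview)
Your argument is correct. The deduction of the triviality of $\ker(g_{|\operatorname{im} s})$ from that of $\ker h$ via $g\circ s=S\circ h$ is exactly what the paper does, with your version spelling out the use of $J\circ S=\operatorname{id}$ to conclude that $S$ has trivial kernel.

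Where you differ is in the proof that $\ker h$ is trivial. The paper distinguishes cases: when $\operatorname{Sym}\Delta$ is abelian it quotes Sansuc's lemma (for finite abelian constant groups the localization map at any cofinite set of places is injective), and for type $D_4$ it reduces $S_3$ to its abelian pieces via the split extension $1\to\Z/3\to S_3\to\Z/2\to 1$ and a diagram chase. Your approach instead uses directly that, for a finite group with trivial Galois action, $H^1$ is the set of conjugacy classes of continuous homomorphisms from $\operatorname{Gal}(k)$; a class in $\ker h$ then corresponds to a finite Galois extension $L/k$ in which every finite place splits completely, and Chebotarev forces $L=k$. This is a genuinely more uniform packaging: it avoids the case split and the auxiliary diagram for $S_3$, and it makes transparent that the argument works for any finite constant coefficient group, not just the particular $\operatorname{Sym}\Delta$ arising here. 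The paper's route, on the other hand, ties into a citable general lemma (Sansuc) and illustrates a d\'evissage technique that is useful when the coefficient group is not constant but only has an abelian filtration. Both ultimately rest on the same density input.
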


\begin{proof}
  Since~\(S\) and~\(s\) are induced by the same section of~\eqref{eq:inn-out}, we have the commutativity \(g \circ s = S \circ h\).  Hence if we show that \(h\) has trivial kernel, so will~\(g\) when restricted to the image of~\(s\).

  If \(\mathbf{G}\) is not of type \(D_4\),  the triviality of \(\operatorname{ker} h\) follows from the more general statement that for each finite abelian group \(B\) with trivial \(\operatorname{Gal}(k)\)-action and each finite set \(S\) of places of \(k\), the map
  \[ H^1(k, B) \longrightarrow \prod_{v \notin S} H^1(k_v, B) \]
  is injective~\cite{Sansuc:groupe-de-brauer}*{Lemme\,1.1.(i)} as an easy application of Chebotarev's density theorem.  If \(\mathbf{G}\) does have type \(D_4\), then \(\operatorname{Sym} \Delta \cong S_3\) is metabelian and the short exact sequence
  \[ 1 \rightarrow \Z/3 \rightarrow S_3 \rightarrow \Z/2 \rightarrow 1 \]
  is a split extension of groups with trivial Galois action.  Hence we obtain a diagram of Galois cohomology sets
\[
\begin{tikzcd}[row sep=7mm, column sep=5mm]
\textstyle\prod \limits_{v \nmid \infty} H^1(k_v, \Z/3) \arrow[hookrightarrow]{r} & \textstyle\prod\limits_{v \nmid \infty} H^1(k_v, S_3) \arrow{r} & \textstyle\prod\limits_{v \nmid \infty} H^1(k_v, \Z/2) \\
{H^1(k, \Z/3)} \arrow[hookrightarrow]{r} \arrow[hookrightarrow]{u}                    & {H^1(k, S_3)} \arrow{r} \arrow{u}                    & {H^1(k, \Z/2)}. \arrow[hookrightarrow]{u}
\end{tikzcd}
\]
in which the ``hook arrows'' have trivial kernel.  By a straightforward diagram chase, the middle vertical arrow has trivial kernel, too.
\end{proof}

From the discussion in Section~\ref{section:galois}, the classes in the image of the sections~\(s\) and~\(S\) correspond to those forms which have Borel subgroups defined over~\(k\) and~\(k_v\), respectively, in other words to the quasi split forms.  So the proposition already shows that a finite splitting principle holds among quasi split forms of all types.  More generally, the proof showed that \(h\) and hence~\(g_{|\operatorname{im} s}\) still have trivial kernel if any finite number of places are omitted.

\begin{proposition} \label{prop:ker-equals-ker}
\(\ker g = i(\ker f)\).
\end{proposition}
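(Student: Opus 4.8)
The plan is to prove the two inclusions separately, using only the commutativity of the two squares of the first fundamental diagram, the triviality of $\ker I$ and of $\ker h$, and the exactness of the bottom row at the term $H^1(k, \operatorname{Aut} \mathbf{G})$.

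The inclusion $i(\ker f) \subseteq \ker g$ is immediate: if $x \in H^1(k, \operatorname{Ad} \mathbf{G})$ has $f(x)$ equal to the distinguished point, then by commutativity of the left square $g(i(x)) = I(f(x))$ is distinguished as well, since $I$ preserves the distinguished point. For the reverse inclusion $\ker g \subseteq i(\ker f)$, I would start with a class $\xi \in H^1(k, \operatorname{Aut} \mathbf{G})$ with $g(\xi)$ distinguished. Commutativity of the right square gives $h(j(\xi)) = J(g(\xi))$, which is distinguished; since $h$ has trivial kernel by Proposition~\ref{prop:h-trivial-kernel}, the class $j(\xi)$ is distinguished in $H^1(k, \operatorname{Sym} \Delta)$. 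I would then invoke the exactness of the pointed cohomology sequence attached to the short exact sequence~\eqref{eq:inn-out} at $H^1(k, \operatorname{Aut} \mathbf{G})$~\cite{Serre:galois-cohomology}*{Prop.~38, p.\,51}, i.e.\ that the preimage of the distinguished point under $j$ is exactly the image of $i$; hence $\xi = i(x)$ for some $x \in H^1(k, \operatorname{Ad} \mathbf{G})$. Finally, commutativity of the left square gives $I(f(x)) = g(i(x)) = g(\xi)$, which is distinguished, so triviality of $\ker I$ forces $f(x)$ to be distinguished, that is $x \in \ker f$; thus $\xi \in i(\ker f)$.

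I do not expect a serious obstacle: the argument is a short diagram chase once Proposition~\ref{prop:h-trivial-kernel} is available. The two points worth stating with care are that ``$\ker$'' throughout denotes the preimage of the distinguished point of a pointed set rather than a group-theoretic kernel, and that the exactness used at $H^1(k, \operatorname{Aut} \mathbf{G})$ is the nonabelian version ``preimage of the basepoint equals $\operatorname{im} i$'', which holds for the short exact sequence of $\operatorname{Gal}(k)$-groups~\eqref{eq:inn-out} independently of its splitting. If one prefers to avoid citing the exact sequence, the step $j(\xi) = {*} \Rightarrow \xi \in \operatorname{im} i$ can be made explicit by lifting a coboundary witness from $\operatorname{Sym} \Delta$ to $\operatorname{Aut} \mathbf{G}$ along the (surjective) projection and twisting the chosen cocycle for $\xi$ by it, but appealing to the standard sequence is cleaner.
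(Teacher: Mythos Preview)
Your proof is correct and follows essentially the same diagram chase as the paper's own argument: both use commutativity of the two squares, triviality of $\ker h$ from Proposition~\ref{prop:h-trivial-kernel}, exactness of the bottom row at $H^1(k,\operatorname{Aut}\mathbf{G})$, and triviality of $\ker I$, in the same order. The only differences are cosmetic (you treat the easy inclusion first and add some clarifying remarks on pointed-set kernels).
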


\begin{proof}
Let \(\alpha \in \ker g\).  Then \(1 = J(g(\alpha)) = h(j(a))\), hence \(j(a) = 1\) because \(\operatorname{ker} h = 1\) by Proposition~\ref{prop:h-trivial-kernel}.  Exactness of the lower row in the first fundamental diagram shows that there exists \(\beta \in H^1(k, \operatorname{Ad} \mathbf{G})\) such that \(i(\beta) = \alpha\), and \(I(f(\beta)) = g(i(\beta)) = g(\alpha) = 1\).  Since \(I\) has trivial kernel, we obtain \(f(\beta) = 1\), so \(\alpha \in i(\ker f)\).  Conversely, for \(\beta \in \ker f\), we have \(g(i(\beta)) = I(f(\beta)) = I(1) = 1\).
\end{proof}

Since \(i\) has trivial kernel, we see that the finite splitting principle is equivalent to \(\ker f\) being trivial.  Therefore, we now turn our attention to the second fundamental diagram.

\begin{proposition} \label{prop:injective}
  The homomorphism of abelian groups
  \[ b \colon H^2(k, Z(\mathbf{G})) \longrightarrow \bigoplus_{v \nmid \infty} H^2(k_v, Z(\mathbf{G})) \]
  is injective if and only if one or both of the following is true:
  \begin{enumerate}[label=(\roman*)]
  \item \label{item:odd-center} \(\mathbf{G}\) has type \(A_{2n}\), \(E_6\), \(E_8\), \(F_4\), or \(G_2\),
  \item \label{item:almost-imaginary} \(k\) has at most one real place.
  \end{enumerate}
\end{proposition}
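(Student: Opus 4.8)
The plan is to rewrite $b$ in terms of Brauer groups and then feed in global class field theory, so that everything reduces to the structure of $Z(\mathbf{G})$ as a Galois module. Since $\mathbf{G}$ is split, simply connected and absolutely almost simple, the character group of $Z(\mathbf{G})$ is the quotient $\Lambda_{\mathrm w}/\Lambda_{\mathrm r}$ of the weight lattice by the root lattice, with trivial Galois action; hence $Z(\mathbf{G}) \cong \operatorname{Hom}(\Lambda_{\mathrm w}/\Lambda_{\mathrm r}, \mu)$. Running through the Cartan--Killing types, this gives $Z(\mathbf{G}) \cong \mu_{n+1}$ in type $A_n$, $\mu_2$ in types $B_n$, $C_n$, $E_7$, $\mu_4$ in type $D_n$ with $n$ odd, $\mu_2 \times \mu_2$ in type $D_n$ with $n$ even, $\mu_3$ in type $E_6$, and the trivial group in types $E_8$, $F_4$, $G_2$. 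In particular $|Z(\mathbf{G})|$ is odd exactly for the types listed in~\ref{item:odd-center}.

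The trivial-center case is immediate, so I would next suppose $Z(\mathbf{G}) = \mu_m$. Kummer theory (the sequence $1 \to \mu_m \to \mathbb{G}_m \xrightarrow{m} \mathbb{G}_m \to 1$ together with Hilbert~90) gives compatible identifications $H^2(k, \mu_m) \cong \operatorname{Br}(k)[m]$ and $H^2(k_v, \mu_m) \cong \operatorname{Br}(k_v)[m]$ for every place $v$, turning $b$ into the map $\operatorname{Br}(k)[m] \to \bigoplus_{v \nmid \infty} \operatorname{Br}(k_v)[m]$. Now I would invoke the values $\operatorname{Br}(\C) = 0$, $\operatorname{Br}(\R) \cong \tfrac12\Z/\Z$, $\operatorname{Br}(k_v) \cong \Q/\Z$ for $v \nmid \infty$, together with the fundamental exact sequence of global class field theory $0 \to \operatorname{Br}(k) \to \bigoplus_v \operatorname{Br}(k_v) \xrightarrow{\sum_v \operatorname{inv}_v} \Q/\Z \to 0$. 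A class $\xi \in \ker b$ lies in $\operatorname{Br}(k)[m]$ and has $\operatorname{inv}_v(\xi) = 0$ at every finite place, so reciprocity forces $\sum_{v \mid \infty} \operatorname{inv}_v(\xi) = 0$; thus $\xi$ is determined by the tuple $(\operatorname{inv}_v(\xi))_{v \text{ real}} \in \bigl((\tfrac12\Z/\Z)[m]\bigr)^{r_1}$, where $r_1$ is the number of real places, and this tuple has vanishing coordinate sum. Conversely, any such tuple, extended by zero over the finite places, has trivial invariant sum and therefore lifts through the fundamental sequence to an element of $\operatorname{Br}(k)$ which is $m$-torsion (its invariants all lie in $\tfrac1m\Z/\Z$, and $\operatorname{Br}(k) \hookrightarrow \bigoplus_v \operatorname{Br}(k_v)$) and hence lies in $\ker b$. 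So $\ker b$ is isomorphic to the subgroup of $\bigl((\tfrac12\Z/\Z)[m]\bigr)^{r_1}$ of tuples summing to zero; this is trivial when $m$ is odd, and when $m$ is even it is trivial if and only if $r_1 \le 1$.

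For $Z(\mathbf{G}) = \mu_2 \times \mu_2$ (type $D_n$, $n$ even), cohomology is additive, so $b$ is the direct sum of two copies of the $\mu_2$-map and $\ker b = 0$ exactly when $r_1 \le 1$ by the previous paragraph. Putting the cases together: $b$ is injective if and only if $|Z(\mathbf{G})|$ is odd, which is condition~\ref{item:odd-center}, or $r_1 \le 1$, which is condition~\ref{item:almost-imaginary}; and when neither holds --- $m$ even and $r_1 \ge 2$ --- a class in $\operatorname{Br}(k)[2]$ ramified at exactly two real places, which exists by Albert--Brauer--Hasse--Noether, is a nonzero element of $\ker b$. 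I expect the only point requiring care to be the bookkeeping: recording that the center is the multiplicative group scheme $\mu_m$ rather than the constant group $\Z/m$, so that the parity of $m$ (not $m$ itself) governs the archimedean local cohomology, and correctly matching the parity of $|Z(\mathbf{G})|$ against the list of types. The class field theory input, once $b$ is rewritten via Brauer groups, is entirely standard.
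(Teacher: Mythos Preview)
Your proof is correct and follows essentially the same route as the paper: both identify $H^2(k,\mu_m)$ with $\operatorname{Br}(k)[m]$ via Kummer theory, invoke the Albert--Brauer--Hasse--Noether exact sequence to analyze the kernel in terms of the archimedean invariants, and handle the $D_{2k}$ case as a direct sum of two copies of the $\mu_2$ map. Your exposition is slightly more systematic in that you explicitly identify $\ker b$ with the sum-zero subgroup of $\bigl((\tfrac12\Z/\Z)[m]\bigr)^{r_1}$, whereas the paper argues each direction case by case, but the underlying argument is the same.
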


\begin{proof}
  Recall the Albert--Brauer--Hasse--Noether theorem stating that the Brauer group of \(k\) is determined by the Brauer group of the local completions of \(k\) in the sense that we have a short exact sequence of abelian groups
\begin{equation} \label{eq:brauer} 1 \longrightarrow \operatorname{Br}(k) \xrightarrow{\ \eta \ } \bigoplus_{v \in V_k} \operatorname{Br}(k_v) \longrightarrow \Q / \Z \longrightarrow 1. \end{equation}
The map to \(\Q/\Z\) sums up Hasse invariants where for a real place \(v\), the Hasse invariant of the nontrivial element in \(\operatorname{Br}(k_v) \cong \Z / 2\) is defined to be \(1/2\).  If \(Z(\mathbf{G}) \cong \mu_n\) is cyclic of order \(n\), then by~\cite{Platonov-Rapinchuk:algebraic-groups}*{Lemma~2.6, p.\,73}, the map \(b\) can be identified with the restriction of \(\eta\) to the subgroups of elements of order dividing \(n\) followed by the projection leaving out the direct summands for infinite places
\[ b \colon \operatorname{Br}(k)_n \longrightarrow \bigoplus_{v \nmid \infty} \operatorname{Br}(k_v)_n. \]
If \(k\) is totally imaginary, the Galois cohomology \(H^2(k_v, \operatorname{Z}(\mathbf{G}))\) is trivial for each \(v \mid \infty\) so that \(b\) is injective.  If \(k\) has exactly one real place~\(v_0\), the map \(b\) is still injective because whether a central simple \(k\)-algebra splits at the real place of \(k\) is determined by the behavior at the remaining places by exactness of the sequence.  Since \(\mathbf{G}\) is \(k\)-split, it has inner type, hence by \cite{Platonov-Rapinchuk:algebraic-groups}*{table on p.\,332}, it has cyclic center unless it is of type \(D_{2k}\) in which case \(Z(\mathbf{G}) \cong \mu_2 \times \mu_2\).  But in that case, \(b\) is the product of two copies of the map that we would obtain in case \(Z(\mathbf{G}) = \mu_2\), so it is injective, too.  This shows injectivity of \(b\) if condition~\ref{item:almost-imaginary} applies.

If condition~\ref{item:odd-center} applies, we have \(Z(\mathbf{G}) = \mu_n\) with \(n\) odd.  But since \(\operatorname{Br}(\R) \cong \Z / 2\), it is clear that the kernel of \(\operatorname{Br}(k) \longrightarrow \bigoplus_{v \nmid \infty} \operatorname{Br}(k_v)\) consists of order two elements only, so \(\ker b\) is trivial, regardless of the field \(k\).

Conversely, suppose \(k\) has at least two real places \(v_0\) and \(v_1\) and that \(\mathbf{G}\) has a type different from the ones listed in~\ref{item:odd-center} and different from \(D_{2k}\).  Then by~\cite{Platonov-Rapinchuk:algebraic-groups}*{table on p.\,332}, we know that \(Z(\mathbf{G}) \cong \mu_n\) with \(n\) even.  Let \(A \in \operatorname{Br}(k)\) be the unique class which does not split precisely at \(v_0\) and \(v_1\).  Then \(A\) has order two, hence \(A \in \operatorname{Br}(k)_n\) and \(A \in \ker b\) by construction.  Similarly, if \(\mathbf{G}\) has type \(D_{2k}\), then \((A,A) \in \ker b\).  So if neither \ref{item:odd-center} nor \ref{item:almost-imaginary} applies, then \(b\) is not injective.
\end{proof}

\begin{proposition} \label{prop:ker-equals-im}
Suppose that at least one of the conditions \ref{item:odd-center} and \ref{item:almost-imaginary} in Proposition~\ref{prop:injective} applies.  Then \(\ker f = \operatorname{im} \pi^1\).
\end{proposition}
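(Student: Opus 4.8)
The plan is to read off both inclusions $\operatorname{im}\pi^1 \subseteq \ker f$ and $\ker f \subseteq \operatorname{im}\pi^1$ from the second fundamental diagram, using that its bottom row \eqref{eq:les} is exact at the middle term $H^1(k,\operatorname{Ad}\mathbf{G})$ and feeding in Proposition~\ref{prop:injective} for the second inclusion.

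For $\operatorname{im}\pi^1 \subseteq \ker f$, I would exploit that $\mathbf{G}$ is simply connected: by the theorem of Kneser, Harder, and Chernousov, $H^1(k_v,\mathbf{G})=1$ at every non-archimedean place $v$, so the top left corner $\bigoplus_{v\nmid\infty}H^1(k_v,\mathbf{G})$ of the diagram vanishes and the localization map $H^1(k,\mathbf{G})\to\bigoplus_{v\nmid\infty}H^1(k_v,\mathbf{G})$ is trivial. Commutativity of the left square then forces $f\circ\pi^1$ to be trivial, that is, $\pi^1(\gamma)\in\ker f$ for every $\gamma\in H^1(k,\mathbf{G})$.

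For the reverse inclusion $\ker f \subseteq \operatorname{im}\pi^1$, take $\alpha\in\ker f$. Commutativity of the right square gives $b(\delta^1(\alpha)) = \bigl(\bigoplus_v\delta^1_v\bigr)(f(\alpha)) = 1$, so $\delta^1(\alpha)\in\ker b$. This is precisely where the hypothesis enters: since at least one of \ref{item:odd-center}, \ref{item:almost-imaginary} holds, Proposition~\ref{prop:injective} shows that $b$ is injective, whence $\delta^1(\alpha)=1$. Exactness of \eqref{eq:les} at $H^1(k,\operatorname{Ad}\mathbf{G})$, i.e.\ $\ker\delta^1=\operatorname{im}\pi^1$, then yields $\alpha\in\operatorname{im}\pi^1$. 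Combined with the previous paragraph, this proves $\ker f=\operatorname{im}\pi^1$.

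I do not expect a genuine obstacle here: the argument is a short diagram chase once the two external inputs are granted, namely the vanishing of $H^1$ of a simply connected group over the non-archimedean completions and the injectivity of $b$ supplied by Proposition~\ref{prop:injective}. The points to keep honest are that exactness in the bottom row is invoked only at the middle term (which is all that is guaranteed), and that the $\bigoplus$-symbols denote the finite-support subsets of the products rather than categorical (co)products; neither subtlety interferes with the chase. It is also worth recalling in the write-up that the vanishing $H^1(k_v,\mathbf{G})=1$ genuinely requires Chernousov's contribution for type $E_8$, so that the statement does apply uniformly across all Cartan--Killing types.
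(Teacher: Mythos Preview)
Your proof is correct and is essentially the same diagram chase as the paper's; the only cosmetic difference is that for the inclusion \(\operatorname{im}\pi^1\subseteq\ker f\) you use commutativity of the left square directly, whereas the paper first notes that \(\bigoplus_v\delta^1_v\) has trivial kernel (from the vanishing of the upper left corner plus exactness of the top row at the middle) and then argues \(\ker f=\ker\delta^1\) via the right square. One small correction on attribution: the vanishing \(H^1(k_v,\mathbf{G})=1\) over non-archimedean local fields is Kneser's theorem alone (as the paper cites it); Chernousov's contribution pertains to the global Hasse principle for \(E_8\), not to the local statement, so your closing remark should be omitted.
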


\begin{proof}
The upper left set \(\bigoplus_{v \nmid \infty} H^1(k_v, \mathbf{G})\) in the second fundamental diagram is trivial by M.\,Kneser's theorem that the first Galois cohomology of simply-connected groups vanishes over nonarchimedean local fields~\cite{Platonov-Rapinchuk:algebraic-groups}*{Theorem~6.4}.  It follows that the map \(\oplus^1_v \delta_v\) has trivial kernel.  Since also \(b\) has trivial kernel by Proposition~\ref{prop:injective}, commutativity and exactness at the middle term of the lower row imply \(\ker f = \ker \delta^1 = \operatorname{im} \pi^1\).
\end{proof}

So if condition \ref{item:odd-center} or \ref{item:almost-imaginary} in Proposition~\ref{prop:injective} applies, then the finite splitting principle is equivalent to the triviality of the image of \(\pi^1\).  It is then actually equivalent to the triviality of \(H^1(k, \mathbf{G})\) as the following proposition reveals.

\begin{proposition} \label{prop:ker-pi-trivial}
  The map \(\pi^1\) has trivial kernel.
\end{proposition}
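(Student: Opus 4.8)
The plan is to deduce the statement from the surjectivity of the connecting map $\delta^0\colon(\operatorname{Ad}\mathbf{G})(k)\to H^1(k,Z(\mathbf{G}))$ in the long exact sequence~\eqref{eq:les}. Suppose this surjectivity is granted. Then exactness of~\eqref{eq:les} at $H^1(k,Z(\mathbf{G}))$ gives $\ker\iota^1=\operatorname{im}\delta^0=H^1(k,Z(\mathbf{G}))$, so $\iota^1$ maps everything to the base point. Exactness of~\eqref{eq:les} at $H^1(k,\mathbf{G})$ then yields $\ker\pi^1=\operatorname{im}\iota^1=\{1\}$, which is the claim. Note that this reasoning is reversible, so triviality of $\ker\pi^1$ is actually equivalent to surjectivity of $\delta^0$.

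To prove that $\delta^0$ is surjective I would localise the sequence at a maximal torus. Let $\mathbf{S}\subset\mathbf{G}$ be a maximal $k$-split torus. Since $\mathbf{G}$ is $k$-split, $\mathbf{S}$ is a maximal torus of $\mathbf{G}$, and therefore $Z(\mathbf{G})\subset\mathbf{S}$. Set $\overline{\mathbf{S}}=\mathbf{S}/Z(\mathbf{G})$, a $k$-subtorus of $\operatorname{Ad}\mathbf{G}=\mathbf{G}/Z(\mathbf{G})$. The inclusions $\mathbf{S}\hookrightarrow\mathbf{G}$ and $\overline{\mathbf{S}}\hookrightarrow\operatorname{Ad}\mathbf{G}$, together with the identity on $Z(\mathbf{G})$, form a morphism from the central extension $1\to Z(\mathbf{G})\to\mathbf{S}\to\overline{\mathbf{S}}\to1$ to the central extension $1\to Z(\mathbf{G})\to\mathbf{G}\to\operatorname{Ad}\mathbf{G}\to1$. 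By naturality of connecting maps, the connecting map $\delta^0_{\mathbf{S}}$ of the first extension satisfies
\[ \delta^0_{\mathbf{S}} = \delta^0 \circ \bigl(\overline{\mathbf{S}}(k)\hookrightarrow(\operatorname{Ad}\mathbf{G})(k)\bigr), \]
so $\operatorname{im}\delta^0\supseteq\operatorname{im}\delta^0_{\mathbf{S}}$; it therefore suffices to show that $\delta^0_{\mathbf{S}}$ is surjective.

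For this, observe that $\mathbf{S}\cong\mathbb{G}_m^r$ over $k$, so $H^1(k,\mathbf{S})=1$ by Hilbert's Theorem~90. Exactness at $H^1(k,Z(\mathbf{G}))$ of the cohomology sequence of $1\to Z(\mathbf{G})\to\mathbf{S}\to\overline{\mathbf{S}}\to1$ then forces $\operatorname{im}\delta^0_{\mathbf{S}}=\ker\bigl(H^1(k,Z(\mathbf{G}))\to H^1(k,\mathbf{S})\bigr)=H^1(k,Z(\mathbf{G}))$, so $\delta^0_{\mathbf{S}}$ is onto, completing the argument. I do not expect a real obstacle here: the whole proof is a short diagram chase fed by Hilbert~90, and the one place deserving a little care is the bookkeeping of the first paragraph. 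Note, finally, that simple connectedness of $\mathbf{G}$ is never used, so the proposition in fact holds for every split reductive $k$-group.
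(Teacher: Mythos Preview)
Your proof is correct and follows essentially the same approach as the paper: both arguments use that $Z(\mathbf{G})$ lies in the maximal $k$-split torus $\mathbf{S}$ and apply Hilbert's Theorem~90 to conclude $H^1(k,\mathbf{S})=1$. The paper phrases this slightly more directly by observing that $\iota^1$ factors through $H^1(k,\mathbf{S})$ and is therefore the trivial map, whereas you route through the equivalent statement that $\delta^0$ is surjective via naturality with the torus extension; the content is the same.
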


\begin{proof}
  Recall that \(\mathbf{S} \subset \mathbf{G}\) denotes a maximal \(k\)-split torus and that~\(\mathbf{S}\) is maximal overall because \(\mathbf{G}\) is a \(k\)-split group.  Therefore \(\mathbf{S}\) is self-centralizing \cite{Springer:linear-algebraic}*{Corollary~7.6.4, p.\,130} and thus contains the center \(Z(\mathbf{G})\).  By functoriality, we conclude that the map \(\iota^1\) in the long exact sequence \eqref{eq:les} factors through \(H^1(k, \mathbf{S})\).  Since \(\mathbf{S}\) splits over \(k\), the set \(H^1(k, \mathbf{S}) \cong H^1(k, \mathbf{GL_1})^{\operatorname{rank} \mathbf{G}}\) is trivial by Hilbert's Theorem~90.  Hence~\(\iota^1\) is trivial and so is \(\ker \pi^1\) by exactness.
\end{proof}

As a side remark, the map \(\delta^0\) in \eqref{eq:les} is a homomorphism of abelian groups~\cite{Serre:galois-cohomology}*{Corollary~2, p.\,53}, hence the proposition shows that
\[ (\operatorname{Ad} \mathbf{G})(k) / \pi^0 (\mathbf{G}(k)) = \operatorname{coker} \pi^0 \cong H^1(k, Z(\mathbf{G})) \cong \prod_{i=1}^s k^* / (k^*)^{n_i} \]
where \(Z(\mathbf{G}) \cong \prod_{i=1}^s \mathbb{Z} / n_i \mathbb{Z}\) can be computed as the quotient of the weight lattice by the root lattice of \((\mathbf{G}, \mathbf{S})\).  We have now finally collected all necessary facts to prove Theorem~\ref{thm:finite-splitting-principle}.

\begin{proof}[Proof of Theorem~\ref{thm:finite-splitting-principle}]
  The celebrated Hasse principle of Kneser--Harder--Chernousov~\cite{Platonov-Rapinchuk:algebraic-groups}*{Theorem~6.6} says that
\begin{equation} \label{eq:h1g} H^1(k, \mathbf{G}) \cong \prod_{v \mid \infty} H^1(k_v, \mathbf{G}). \end{equation}
Thus for a totally imaginary number field \(k\), the Galois cohomology \(H^1(k, \mathbf{G})\) is trivial.  Additionally, both \(H^1(\R, \mathbf{SL_n})\) and \(H^1(\R, \mathbf{Sp_n})\) are well known to be trivial, see for example \cite{Adams-Taibi:Galois-cohomology}*{Table~1, p.\,1093}.  This shows that under all three condition in the theorem, \(\ker g\) is trivial by Propositions~\ref{prop:ker-equals-ker} and~\ref{prop:ker-equals-im}.

  Conversely, first assume that \(k\) has at least two real places and that \(\mathbf{G}\) is not of type \(A_{2n}\).  If \(\mathbf{G}\) has type \(A_{2n+1}\) with \(n \ge 0\), \(B_n\) with \(n \ge 2\), \(C_n\) with \(n \ge 3\), \(D_n\) with \(n \ge 4\), or \(E_7\), then by Proposition~\ref{prop:injective}, there exists a nontrivial class \(\alpha \in \ker b\).  The class \(\alpha\) has a preimage \(\beta \in H^1(k, \operatorname{Ad} \mathbf{G})\) under \(\delta^1\) because \(\delta^1\) is surjective by \cite{Platonov-Rapinchuk:algebraic-groups}*{Theorem~6.20, p.\,326}.  Then \(\oplus_v \delta^1_v (f(\beta)) = b(\delta^1(\beta)) = b(\alpha) = 1\), so \(f(\beta)=1\) because \(\oplus_v \delta^1_v\) has trivial kernel, again by Kneser's theorem~\cite{Platonov-Rapinchuk:algebraic-groups}*{Theorem~6.4}.  So \(f\) has nontrivial kernel and so has \(g\) by Proposition~\ref{prop:ker-equals-ker} because \(i\) has trivial kernel.  If on the other hand \(\mathbf{G}\) has type \(E_6\), \(E_8\), \(F_4\), or \(G_2\), then \cite{Adams-Taibi:Galois-cohomology}*{Table~3, p.\,1094} shows that \(H^1(\R, \mathbf{G})\) and hence \(H^1(k, \mathbf{G})\) is nontrivial so that by Propositions~\ref{prop:ker-equals-ker}, \ref{prop:ker-equals-im}, and~\ref{prop:ker-pi-trivial}, \(\ker g\) is nontrivial.
  
  Now assume that \(k\) has precisely one real place and \(\mathbf{G}\) has neither type \(A_n\) nor \(C_n\).  Then similarly~\cite{Adams-Taibi:Galois-cohomology}*{Table~1, p.\,1093} shows that \(H^1(\R, \mathbf{G})\) and hence \(H^1(k, \mathbf{G})\) is nontrivial and again \(\ker g\) is nontrivial by Propositions~\ref{prop:ker-equals-ker}, \ref{prop:ker-equals-im}, and~\ref{prop:ker-pi-trivial}.  This exhausts all cases and the proof is complete.
\end{proof}

\section{Proof of Theorem~\ref{thm:main-theorem}} \label{section:main-theorem}

  We now explain how Theorem~\ref{thm:main-theorem} on profinitely solitary Chevalley groups follows from Theorem~\ref{thm:finite-splitting-principle} on the finite splitting principle.  First we observe that if any one of the statements~\ref{item:totima}--\ref{item:a1} in Theorem~\ref{thm:main-theorem} is true, then \(\sum_{v \mid \infty} \operatorname{rank}_{k_v} \mathbf{G} \ge 2\).  Since \(\mathbf{G}\) is moreover \(k\)-split and in particular \(k\)-isotropic, the congruence kernel of \(\mathbf{G}\) is finite~\citelist{\cite{Raghunathan:csp-i} \cite{Raghunathan:csp-ii}}.  Now assume that one of the statements~\ref{item:totima}--\ref{item:a2n} is true.  Then by Theorem~\ref{thm:finite-splitting-principle}, the finite splitting principle holds for the \(k\)-group \(\mathbf{G}\).  Choose a \(k\)-embedding \(\mathbf{G} \subset \mathbf{GL_n}\) and set \(\Gamma = \mathbf{G} \cap \mathbf{GL_n}(\mathcal{O}_k)\) where \(\mathcal{O}_k\) denotes the ring of integers of \(k\).  We may assume that \(\widehat{\Gamma}\) has trivial congruence kernel, replacing the residually finite group \(\Gamma\) with a finite index subgroup if need be.

  Let \(\Delta\) be a finitely generated residually finite group such that \(\widehat{\Gamma}\) is commensurable with \(\widehat{\Delta}\).  Replacing \(\Gamma\) and \(\Delta\) with suitable finite index subgroups, we may assume \(\widehat{\Gamma} \cong \widehat{\Delta}\).  When \(\mathbf{G}\) is not of type \(D_4\), the outer automorphism group of \(\mathbf{G}\) has order at most two.  In this case, we can let \(\Phi \in \operatorname{Aut} \mathbf{G}\) be a representative of the nontrivial class.  Then, \(\mathbf{G} \subset \mathbf{GL_n}\) can be diagonally embedded into \(\mathbf{GL_{2n}}\) as two blocks, one of them identically, the other by means of \(\Phi\).  Replacing \(\mathbf{G}\) with the image of this embedding, every automorphism is induced by conjugation in \(\mathbf{GL_{2n}}\). When \(\mathbf{G}\) is of type \(D_4\), the outer automorphism group of \(\mathbf{G}\) is isomorphic to \(S_3\). In this case, we can similarly find an embedding of \(\mathbf{G}\) into \(\mathbf{GL_{6n}}\) so that every automorphism is induced by conjugation in \(\mathbf{GL_{6n}}\). Thus the assumptions of~\cite{Spitler:profinite}*{Theorem~7.1} are met, and we obtain an embedding \(\phi \colon \Delta \rightarrow \Lambda\) in an arithmetic group \(\Lambda \subset \mathbf{H}\) of an \(l\)-form \(\mathbf{H}\) of \(\mathbf{G}\) and an isomorphism \(\mathbb{A}^f_k \cong \mathbb{A}^f_l\) over which \(\mathbf{G}(\mathbb{A}^f_k) \cong \mathbf{H}(\mathbb{A}^f_l)\).

  We remark that the statement of~\cite{Spitler:profinite}*{Theorem~7.1} actually also assumes that \(\Gamma\) has no nontrivial homomorphisms to \(Z(\mathbf{G})\), but this assumption is not strictly necessary to reach the conclusion quoted above. One way to see this is by observing that when \(\operatorname{Hom}(\Gamma, Z(\mathbf{G}))\) is nontrivial, superrigidity implies the representations of \(\Gamma\) in \(\mathbf{G}(\overline{k})\) are essentially in bijection with the elements of \(\operatorname{Hom}(\Gamma, Z(\mathbf{G}))\). However, all of these representations become the same after projecting to \(\operatorname{Ad} \mathbf{G}(\overline{k})\). One can then apply~\cite{Spitler:profinite}*{Theorem~6.2} to both \(\mathbf{G}\) and \(\operatorname{Ad} \mathbf{G}\) to see that all the produced representations of \(\Delta\) in \(\mathbf{G}(\overline{k})\) actually have their images in arithmetic subgroups of a single algebraic group \(\mathbf{H}\). So we can choose any of these representations, and after potentially passing to a finite index subgroup of \(\Delta\), we can ensure that the homomorphism \(\phi \colon \Delta \rightarrow \Lambda\) is still injective.

  Since \(k\) is locally determined, we have an isomorphism \(\sigma \colon k \xrightarrow{\cong} l\).  We then have \(\mathbf{G}(\mathbb{A}^f_k) \cong {}^\sigma \mathbf{H}(\mathbb{A}^f_k)\), so \({}^\sigma \mathbf{H}\) is a \(k\)-form of \(\mathbf{G}\) which splits at all finite places of \(k\).  Since \(\mathbf{G}\) satisfies the finite splitting principle, it follows that \(\mathbf{G} \cong_k {}^\sigma \mathbf{H}\).  So \(\Lambda\) is commensurable with \(\Gamma\).  Moreover, \(\mathbf{H}\) has CSP because \(\mathbf{G}\) does.  By construction, the injective homomorphism \(\phi \colon \Delta \rightarrow \Lambda\) has the property that \(\widehat{\phi}(\widehat{\Delta}) \subset \mathbf{H}(\mathbb{A}^f_l)\) is the closure of \(\Lambda\) in \(\mathbf{H}(\mathbb{A}^f_l)\).  Since the congruence kernel \(C(\mathbf{H})\) is finite, we find finite index subgroups \(\Lambda_0 \le \Lambda\) and \(\Delta_0 \le \Delta\) such that \(\phi(\Delta_0) \subset \Lambda_0\), such that \(\widehat{\Lambda_0}\) intersects \(C(\mathbf{H})\) trivially, and such that \(\widehat{\phi} \colon \widehat{\Delta_0} \rightarrow \widehat{\Lambda_0}\) is an isomorphism.  So either \(\phi\) embeds \(\Delta_0\) as a proper Grothendieck subgroup of \(\Lambda_0\) or \(\phi(\Delta_0) = \Lambda_0\), hence \(\Delta\) is commensurable with \(\Gamma\).

  \medskip
For case~\ref{item:qforms}, the argument as above still produces a \(k\)-form \({}^\sigma \mathbf{H}\) of \(\mathbf{G}\) which splits at all finite places of \(k\). For each of these types the finite splitting principle fails, but using the tables from \cite{Adams-Taibi:Galois-cohomology}*{Section~10} one can calculate that for types \(G_2\), \(B_3\), and \(D_4\), the only nontrivial element of \(\operatorname{im} \pi^1\) corresponds to a \(k\)-form which is anisotropic at the infinite place. Hence by Propositions~\ref{prop:ker-equals-ker} and \ref{prop:ker-equals-im}, if \({}^\sigma \mathbf{H}\) is not \(k\)-isomorphic to \(\mathbf{G}\), the arithmetic group \(\Lambda\) is a lattice in a compact group \({}^\sigma \mathbf{H}(\R)\) and hence is finite. Since \(\phi \colon \Delta \rightarrow \Lambda\) is an injection, this would contradict that \(\widehat{\Gamma}\) is commensurable with \(\widehat{\Delta}\), so we see that in fact \({}^\sigma \mathbf{H}\) must be \(k\)-isomorphic to \(\mathbf{G}\). The rest of the argument then follows as above.

For the types \(B_4\), and \(D_5\) in case~\ref{item:qforms}, there are two nontrivial elements of \(\operatorname{im} \pi^1\). One of these corresponds to a \(k\)-form which is anisotropic at the infinite place, and hence can be ruled out by the same argument as above. The other corresponds to a \(k\)-form which has rank one at the infinite place, namely \(\operatorname{Spin}(n,1;\Q)\) where \(n=8\) in the case of type \(B_4\) and \(n=9\) in the case of type \(D_5\). So if \({}^\sigma \mathbf{H}\) is not \(k\)-isomorphic to \(\mathbf{G}\), then \(\phi \colon \Delta \rightarrow \Lambda\) embeds \(\Delta\) as a Zariski-dense subgroup of an arithmetic group \(\Lambda\) which is commensurable to \(\operatorname{SO}(n,1;\Z)\). By potentially passing to a finite index subgroup, we then get a homomorphism \(\overline{\phi}: \Delta \rightarrow \operatorname{SO}(n,1;\Z)\) whose image is Zariski-dense. This means that there is some \( h \in \overline{\phi}(\Delta)\) so that \(h\) acts loxodromically on \(\mathbb{H}^n\), and hence \(\langle h \rangle < \operatorname{SO}(n,1;\Z)\) is a geometrically finite subgroup which is isomorphic to \(\Z\).  From \cite{Bergeron-Haglund-Wise:hyperplane}*{Theorem 1.4, and the remark on p.\,447}, the group \(\operatorname{SO}(n,1;\Z)\) virtually retracts onto its geometrically finite subgroups (see also \cite{Agol-Long-Reid:bianchi} and \cite{Long-Reid:retracts}).  Hence, by restriction, there is also a virtual retraction of \(\overline{\phi}(\Delta)\) onto \(\langle h \rangle \cong \Z\).  This implies there is a finite index subgroup \(\Delta_0 < \Delta\) so that \(b_1(\Delta_0) > 0\) holds for the first Betti number.  Since \(\widehat{\Gamma}\) is commensurable with \(\widehat{\Delta}\), a finite index subgroup \(\Gamma_0 \le \Gamma\) would have \(b_1(\Gamma_0) > 0\) which is a contradiction to \(\Gamma\) having CSP or alternatively property~\((T)\).  So we conclude \({}^\sigma \mathbf{H}\) must be \(k\)-isomorphic to \(\mathbf{G}\) and the rest of the argument then follows as before.

\medskip
Finally, for case~\ref{item:a1} we can appeal to the well known fact that when \(\mathbf{G}\) has type \(A_1\), the \(k\)-forms of \(\mathbf{G}\) are in bijection with the isomorphism classes of quaternion algebras over \(k\). More precisely, \(i\) and \(\delta^1\) are bijections and we can identify \(H^2(k, \operatorname{Z}(\mathbf{G})) = \operatorname{Br}(k)_2\). Then also using Proposition~\ref{prop:ker-equals-ker}, we see there is a bijection of \(\operatorname{ker} g\) with \(\operatorname{ker} b\). Then recalling
\[ b \colon \operatorname{Br}(k)_2 \longrightarrow \bigoplus_{v \nmid \infty} \operatorname{Br}(k_v)_2 \]
and the exact sequence~\eqref{eq:brauer}, we see that when \(k\) has precisely two real places \(\operatorname{ker} b\) has one nontrivial element, corresponding to the \(k\)-form which is not split exactly at both real places, while when \(k\) has precisely three real places \(\operatorname{ker} b\) has three nontrivial elements, each corresponding to a choice of two real places which are exactly those where the \(k\)-form is not split.

So when \(k\) has signature \((2,0)\) in case~\ref{item:a1}, the argument from the previous cases produces a \(k\)-form \({}^\sigma \mathbf{H}\) of \(\mathbf{G}\) which splits at all finite places of \(k\). Then if \({}^\sigma \mathbf{H}\) was not \(k\)-isomorphic to \(\mathbf{G}\), the lattice \(\Lambda\) would lie in the compact group \(\operatorname{SU}(2) \times \operatorname{SU}(2)\).  This again leads to a contradiction, and the rest of the argument follows as in the previous case.

When \(k\) has signature \((3,0)\), if \({}^\sigma \mathbf{H}\) was not \(k\)-isomorphic to \(\mathbf{G}\), \(\Lambda\) would be a lattice in \(\operatorname{SL_2}(\R) \times \operatorname{SU}(2) \times \operatorname{SU}(2)\). This would mean that \(\phi \colon \Delta \rightarrow \Lambda\) embeds \(\Delta\) Zariski-densely as a group commensurable to a Fuchsian group.  In particular this would mean there is some finite index subgroup \(\Delta_0 < \Delta\) which is a nonabelian free group or a surface group, which contradicts that \(\widehat{\Gamma}\) is commensurable with \(\widehat{\Delta}\), again because the first Betti number is a profinite invariant.  Hence \({}^\sigma \mathbf{H}\) must be \(k\)-isomorphic to \(\mathbf{G}\) and the rest of the argument follows.

Lastly, when \(k\) has signature \((2,1)\), if \({}^\sigma \mathbf{H}\) was not \(k\) isomorphic to \(\mathbf{G}\), \(\Lambda\) would be a lattice in \(\operatorname{SL_2}(\C) \times \operatorname{SU}(2) \times \operatorname{SU}(2)\). This would mean that \(\phi \colon \Delta \rightarrow \Lambda\) embeds \(\Delta\) as a group commensurable to a non-elementary  Kleinian group. In particular this would mean \(b_1(\Delta_0) > 0\) for some finite index subgroup \(\Delta_0 < \Delta\), either by virtual fibering in the finite covolume case, or by a standard half lives half dies argument in the case of infinite covolume (for example, see the proof of Proposition 7.6 in \cite{Bridson-et-al:absolute}). This is once more a contradiction, so the proof is complete.

  \section{Proof of Theorem~\ref{thm:converse}} \label{section:converse}

  In this section, we construct various pairs of profinitely commensurable but noncommensurable Chevalley groups in order to prove Theorem~\ref{thm:converse}.  To begin, we deal with the case that \(k\) is not locally determined.  This means there exists a number field \(l\) not isomorphic to~\(k\) and an isomorphism \(\mathbb{A}^f_k \cong \mathbb{A}^f_l\).  Since \(\mathbf{G}\) does not have type \(A_1\), it is a split higher rank group, thus has CSP, and so does the unique simply-connected absolutely almost simple split linear \(l\)-group \(\mathbf{H}\) with the same Cartan--Killing type as \(\mathbf{G}\).  Therefore, also using strong approximation~\cite{Platonov-Rapinchuk:algebraic-groups}*{Theorem~7.12}, the profinite completions of any two arithmetic groups \(\Gamma \le \mathbf{G}\) and \(\Lambda \le \mathbf{H}\) are commensurable with open compact subgroups of \(\mathbf{G}(\mathbb{A}^f_k)\) and \(\mathbf{H}(\mathbb{A}^f_l)\), respectively.  But \(\mathbf{G}(\mathbb{A}^f_k) \cong \mathbf{H}(\mathbb{A}^f_l)\), so \(\Gamma\) and \(\Lambda\) are profinitely commensurable.  By superrigidity~\cite{Margulis:discrete-subgroups}*{Theorem~C, Chapter~VIII}, \(\Gamma\) and \(\Lambda\) are not commensurable as they are defined over different fields.  Hence, for the rest of the proof we assume that \(k\) is locally determined.

  \medskip
	Now assume that \(k\) has at least one real place and that \(\mathbf{G}\) is either of type \(B_n\) with \(n \geq 4\) or of type \(D_n\) with \(n \geq 5\), so that \(\mathbf{G} = \operatorname{Spin}(n+1, n; k)\) or \(\mathbf{G} = \operatorname{Spin}(n, n; k)\) respectively. On the other hand, let \(\mathbf{H} = \operatorname{Spin}(n-3,n+4; k)\) (respectively \(\mathbf{H} = \operatorname{Spin}(n-4,n+4; k)\)). When \(k = \Q\) and \(n =4\) (respectively \(n = 5\)), arithmetic subgroups of \(\mathbf{H}\) are lattices in a rank one Lie group, but in any other case they will be lattices in higher rank Lie groups and hence \(\mathbf{H}\) will have CSP. Therefore, except for the omitted cases, the arithmetic groups in \(\mathbf{G}\) are profinitely commensurable, but not commensurable, with those in~\(\mathbf{H}\). This can be seen directly because the diagonal quadratic forms \(\langle 1, 1, 1, 1 \rangle\) and \(\langle -1, -1, -1, -1 \rangle\) are isometric over \(\Q_p\) for each (finite) prime \(p\).  This observation was already used by M.\,Aka to construct profinitely isomorphic groups with and without Kazhdan's property \((T)\) in~\cite{Aka:property-t}.

        \medskip
	Next we assume that \(k\) has at least one real place and that \(\mathbf{G}\) is of type \(E_6\), \(E_7\), \(E_8\), \(F_4\), or \(G_2\).  From equation \eqref{eq:h1g} and the tables of \cite{Adams-Taibi:Galois-cohomology}*{Section~10}, the set \(H^1(k, \mathbf{G})\) is nontrivial in each of these cases, and hence \(\operatorname{im} \pi^1\) is also nontrivial by Proposition~\ref{prop:ker-pi-trivial}.  More precisely, the tables show \(\pi^1_\R \colon H^1(\R, \mathbf{G}) \rightarrow H^1(\R, \operatorname{Ad} \mathbf{G})\) is injective for each of these types, again using Proposition~\ref{prop:ker-pi-trivial} for the case of \(E_7\).  Using the Hasse principle both for simply-connected and adjoint groups (equation~\eqref{eq:h1g} and \cite{Platonov-Rapinchuk:algebraic-groups}*{Theorem~6.22, p.\,336}), we conclude that \(\pi^1\) is injective.  This shows that the elements in the image of \(\pi^1\) correspond exactly to a choice of inner real forms of \(\mathbf{G}\), coming from \(H^1(\R, \mathbf{G})\), for each real place of \(k\).  Note that by \cite{Serre:galois-cohomology}*{remark at the bottom of p.\,52}, elements in the image of \(\pi^1_\R\) will correspond to simply-connected real forms \(\mathbf{G_0}\) such that \(H^1(\R, \mathbf{G_0})\) has the same number of elements as \(H^1(\R, \mathbf{G})\).  From the tables of~\cite{Adams-Taibi:Galois-cohomology}, we can thus infer that for type \(E_6\), the other element in the image of \(\pi^1_\R\) corresponds to the quasicompact real form \(E_{6(-26)}\) of real rank two, while for type \(E_7\), the other element in the image of \(\pi^1_\R\) corresponds to the hermitian real form \(E_{7(-25)}\) of real rank three.  In types \(E_8\), \(F_4\), and \(G_2\), we have \(\mathbf{G} = \operatorname{Ad}(\mathbf{G})\) so that all real forms come from \(H^1(\R, \mathbf{G})\).
	
	Now we see that when \(k = \Q\) and \(\mathbf{G}\) is of type \(F_4\), or \(G_2\), the only nontrivial elements of \(\operatorname{im} \pi^1\) correspond to \(k\)-forms \(\mathbf{H}\) such that the arithmetic groups in \(\mathbf{H}\) are lattices in rank zero or rank one Lie groups. But in every other case, there is a choice of a nontrivial element of \(\operatorname{im} \pi^1\) so that the corresponding \(k\)-form \(\mathbf{H}\) contains arithmetic groups which are lattices in a higher rank Lie group.  Noting that \(\mathbf{H}\) is an inner twist of the split form \(\mathbf{G}\), we see from \cite{Prasad-Rapinchuk:isotropic}*{Theorem~1\,(iii) and Theorem~3} that \(\mathbf{H}\) is \(k\)-isotropic.  So in these cases, \(\mathbf{H}\) has CSP and the arithmetic groups in \(\mathbf{G}\) are profinitely commensurable, but not commensurable, with those in \(\mathbf{H}\).

        \medskip
	Finally, assume that \(k\) has at least two real places and that \(\mathbf{G}\) is of type \(A_{2n+1}\) with \(n \ge 1\) or \(C_n\) with \(n \ge 2\).  As in the proof of Proposition~\ref{prop:injective}, we can find an element in \(\ker b\) which localizes to nontrivial classes in \(H^2(\R, Z(\mathbf{G}))\) precisely at the two given real places of \(k\).  Recall that \(\delta^1\) is surjective and let \(\alpha = [a]\) be a preimage in \(H^1(k, \operatorname{Ad} \mathbf{G})\) under \(\delta^1\) of this element.  Then by commutatity in the right hand square of the second fundamental diagram, \(\alpha\) localizes to a nontrivial element in \(H^1(\R, \operatorname{Ad} \mathbf{G})\) at the two real places of \(k\).  The only inner real twist of \(\operatorname{SL}_{2n+1}(\R)\) is the group \(\operatorname{SL}_{n+1}(\mathbb{H})\), so when \(\mathbf{G}\) is of type \(A_{2n+1}\) with \(n \ge 1\), this chosen class \(\alpha\) is unique by the Hasse principle for adjoint groups.  It corresponds to a \(k\)-form \(\mathbf{H}\) so that every arithmetic subgroup of \(\mathbf{H}\) is a lattice in a higher rank Lie group.  Again by \cite{Prasad-Rapinchuk:isotropic}*{Theorems~1\,(iii) and~3}, the group \(\mathbf{H}\) is \(k\)-isotropic.  When \(\mathbf{G}\) is of type \(C_n\) with \(n \ge 2\), the chosen class \(\alpha \in H^1(k, \operatorname{Ad} \mathbf{G})\) is not unique.  In fact, by \cite{Serre:galois-cohomology}*{Section~I.5.7}, the fiber \({\delta^1}^{-1}(\delta^1(\alpha))\) of \(\alpha\) is in bijective correspondence with the image of
\[ H^1(k, {}_a \mathbf{G}) \xrightarrow{{}_a \pi^1} H^1(k, \operatorname{Ad}{}_a \mathbf{G}), \]
the ``twist'' of the map \(\pi^1\) by the cocycle \(a\).  Then by the simply-connected and adjoint Hasse principles, the fiber is also in bijective correspondence with the image of
\[ \prod_{v \mid \infty} H^1(k_v, {}_a \mathbf{G}) \xrightarrow{\oplus_v {}_a \pi_v^1} \prod_{v \mid \infty} H^1(k_v, \operatorname{Ad}{}_a \mathbf{G}).\]
By construction, the group \({}_a \mathbf{G}\) does not split at the two specified real places, hence at these real places, it is isomorphic to \(\operatorname{Sp}(p,q)\) for some \(p,q\) with \(p + q = n\). From the tables of \cite{Adams-Taibi:Galois-cohomology}, the elements in the image of \(H^1(\R, \operatorname{Sp}(p,q)) \to H^1(\R, \operatorname{Ad}\operatorname{Sp}(p,q))\) correspond exactly to all the real forms \(\operatorname{Sp}(p,q)\) with \(p+q = n\). Hence we conclude that any combination of two such real forms can be realized at the two specified real places of an element in the fiber of \(\alpha\).  Choosing any two isotropic such groups (so that \(p,q \ge 1\)), we obtain a \(k\)-form \(\mathbf{H}\), which as before has \(k\)-rank at least two. Therefore, when \(\mathbf{G}\) is of type \(A_{2n+1}\) with \(n \ge 1\) or \(C_n\) with \(n \ge 2\), the constructed group \(\mathbf{H}\) has CSP and the arithmetic groups in \(\mathbf{G}\) are profinitely commensurable, but not commensurable, with those in \(\mathbf{H}\).  This completes the proof of Theorem~\ref{thm:converse}.

 \bigskip 
 To conclude, we observe that the occurring rings in the first list of examples below Theorem~\ref{thm:converse} are all rings of integers in number fields of degree at most three.  Number fields of degree at most six are known to be \emph{arithmetically solitary}: They are determined by the Dedekind zeta function and in particular locally determined~\cite{Klingen:similarities}*{Theorem~1.16\,(d), p.\,93}.  All the underlying algebraic groups in the first list are simply-connected, absolutely almost simple \(\Q\)-split linear algebraic \(\Q\)-groups.  The group \(\mathbf{SL_n}\) has type \(A_{n-1}\), the group \(\mathbf{Spin(n+1,n)}\) has type \(B_n\), the group \(\mathbf{Sp_n}\) has type \(C_n\), and the group \(\mathbf{Spin(n,n)}\) has type \(D_n\).  With these observations, we see that the groups in the first list satisfy the assumptions of Theorem~\ref{thm:main-theorem}.

 For the groups in the second list, we additionally note that the number fields \(\Q(\sqrt[8]{7})\) and \(\Q(\sqrt[4]{2})\) are monogenic and the field generators \(\sqrt[8]{7}\) and \(\sqrt[4]{2}\) generate a power integral basis, respectively, just like in the case of \(\Q(\sqrt[3]{2})\).  This can for instance be checked by a general criterion that was recently given by H.\,Smith~\cite{Smith:monogeneity}.  Hence \(\mathbb{Z}[\sqrt[8]{7}]\) and \(\mathbb{Z}[\sqrt[4]{2}]\) are the rings of integers in \(\Q(\sqrt[8]{7})\) and \(\Q(\sqrt[4]{2})\), respectively.  The number field \(\Q(\sqrt[8]{7})\), however, is not locally determined: The ring of finite adeles is isomorphic to the one of \(\Q(\sqrt[8]{112})\) as was proven by Komatsu~\cite{Komatsu:adele-rings}.  With these remarks, we see that the groups in the second list fall under the assumptions of Theorem~\ref{thm:converse}.

\begin{bibdiv}[References]

  \begin{biblist}

    \bib{Adams-Taibi:Galois-cohomology}{article}{
   author={Adams, Jeffrey},
   author={Ta\"{\i}bi, Olivier},
   title={Galois and Cartan cohomology of real groups},
   journal={Duke Math. J.},
   volume={167},
   date={2018},
   number={6},
   pages={1057--1097},
   issn={0012-7094},
   review={\MR{3786301}},
 }
 
 \bib{Agol-Long-Reid:bianchi}{article}{
   author={Agol, I.},
   author={Long, D. D.},
   author={Reid, A. W.},
   title={The Bianchi groups are separable on geometrically finite
   subgroups},
   journal={Ann. of Math. (2)},
   volume={153},
   date={2001},
   number={3},
   pages={599--621},
   issn={0003-486X},
   review={\MR{1836283}},
 }
 
 \bib{Aka:property-t}{article}{
   author={Aka, Menny},
   title={Profinite completions and Kazhdan's property (T)},
   journal={Groups Geom. Dyn.},
   volume={6},
   date={2012},
   number={2},
   pages={221--229},
   issn={1661-7207},
   review={\MR{2914858}},
 }
 
\bib{Bergeron-Haglund-Wise:hyperplane}{article}{
   author={Bergeron, Nicolas},
   author={Haglund, Fr\'{e}d\'{e}ric},
   author={Wise, Daniel T.},
   title={Hyperplane sections in arithmetic hyperbolic manifolds},
   journal={J. Lond. Math. Soc. (2)},
   volume={83},
   date={2011},
   number={2},
   pages={431--448},
   issn={0024-6107},
   review={\MR{2776645}},
} 
 
 \bib{Bridson-et-al:absolute}{article}{
   author={Bridson, M. R.},
   author={McReynolds, D. B.},
   author={Reid, A. W.},
   author={Spitler, R.},
   title={Absolute profinite rigidity and hyperbolic geometry},
   journal={Ann. of Math. (2)},
   volume={192},
   date={2020},
   number={3},
   pages={679--719},
   issn={0003-486X},
   review={\MR{4172619}},
}

\bib{Borel:linear-algebraic}{book}{
   author={Borel, Armand},
   title={Linear algebraic groups},
   series={Graduate Texts in Mathematics},
   volume={126},
   edition={2},
   publisher={Springer-Verlag, New York},
   date={1991},
   pages={xii+288},
   isbn={0-387-97370-2},
   review={\MR{1102012}},
 }

\bib{Kammeyer:absolute}{article}{
   author={Kammeyer, Holger},
   title={On absolutely profinitely solitary lattices in higher rank Lie
   groups},
   journal={Proc. Amer. Math. Soc.},
   volume={151},
   date={2023},
   number={4},
   pages={1801--1809},
   issn={0002-9939},
   review={\MR{4550371}},
 }

\bib{Kammeyer-Kionke:adelic-superrigidity}{article}{
   author={Kammeyer, Holger},
   author={Kionke, Steffen},
   title={Adelic superrigidity and profinitely solitary lattices},
   journal={Pacific J. Math.},
   volume={313},
   date={2021},
   number={1},
   pages={137--158},
   issn={0030-8730},
   review={\MR{4313430}},
 }

 \bib{Kammeyer-Kionke:rigidity}{article}{
   author={Kammeyer, Holger},
   author={Kionke, Steffen},
   title={On the profinite rigidity of lattices in higher rank Lie groups},
   journal={Math. Proc. Cambridge Philos. Soc.},
   volume={174},
   date={2023},
   number={2},
   pages={369--384},
   issn={0305-0041},
   review={\MR{4545210}},
}
 
 \bib{Klingen:similarities}{book}{
   author={Klingen, Norbert},
   title={Arithmetical similarities},
   series={Oxford Mathematical Monographs},
   note={Prime decomposition and finite group theory},
   publisher={The Clarendon Press, Oxford University Press, New York},
   date={1998},
   pages={x+275},
   isbn={0-19-853598-8},
   review={\MR{1638821}},
}

\bib{Komatsu:adele-rings}{article}{
   author={Komatsu, Keiichi},
   title={On the adele rings and zeta-functions of algebraic number fields},
   journal={Kodai Math. J.},
   volume={1},
   date={1978},
   number={3},
   pages={394--400},
   issn={0386-5991},
   review={\MR{517831}},
}

\bib{Long-Reid:retracts}{article}{
   author={Long, D. D.},
   author={Reid, A. W.},
   title={Subgroup separability and virtual retractions of groups},
   journal={Topology},
   volume={47},
   date={2008},
   number={3},
   pages={137--159},
   issn={0040-9383},
   review={\MR{2414358}},
}

\bib{Lubotzky:non-arithmetic}{article}{
   author={Lubotzky, Alexander},
   title={Some more non-arithmetic rigid groups},
   conference={
      title={Geometry, spectral theory, groups, and dynamics},
   },
   book={
      series={Contemp. Math.},
      volume={387},
      publisher={Amer. Math. Soc., Providence, RI},
   },
   date={2005},
   pages={237--244},
   review={\MR{2180210}},
 }
 
\bib{Margulis:discrete-subgroups}{book}{
   author={Margulis, G. A.},
   title={Discrete subgroups of semisimple Lie groups},
   series={Ergebnisse der Mathematik und ihrer Grenzgebiete (3) [Results in
   Mathematics and Related Areas (3)]},
   volume={17},
   publisher={Springer-Verlag, Berlin},
   date={1991},
   pages={x+388},
   isbn={3-540-12179-X},
   review={\MR{1090825}},
}

\bib{Platonov-Rapinchuk:algebraic-groups}{book}{
   author={Platonov, Vladimir},
   author={Rapinchuk, Andrei},
   title={Algebraic groups and number theory},
   series={Pure and Applied Mathematics},
   volume={139},
   publisher={Academic Press, Inc., Boston, MA},
   date={1994},
   pages={xii+614},
   isbn={0-12-558180-7},
   review={\MR{1278263}},
 }

 \bib{Prasad-Rapinchuk:isotropic}{article}{
   author={Prasad, Gopal},
   author={Rapinchuk, Andrei S.},
   title={On the existence of isotropic forms of semi-simple algebraic
   groups over number fields with prescribed local behavior},
   journal={Adv. Math.},
   volume={207},
   date={2006},
   number={2},
   pages={646--660},
   issn={0001-8708},
   review={\MR{2271021}},
 }
 
 \bib{Raghunathan:csp-i}{article}{
   author={Raghunathan, M. S.},
   title={On the congruence subgroup problem},
   journal={Inst. Hautes \'{E}tudes Sci. Publ. Math.},
   number={46},
   date={1976},
   pages={107--161},
   issn={0073-8301},
   review={\MR{507030}},
 }

 \bib{Raghunathan:csp-ii}{article}{
   author={Raghunathan, M. S.},
   title={On the congruence subgroup problem. II},
   journal={Invent. Math.},
   volume={85},
   date={1986},
   number={1},
   pages={73--117},
   issn={0020-9910},
   review={\MR{842049}},
}

\bib{Sansuc:groupe-de-brauer}{article}{
   author={Sansuc, J.-J.},
   title={Groupe de Brauer et arithm\'{e}tique des groupes alg\'{e}briques lin\'{e}aires
   sur un corps de nombres},
   language={French},
   journal={J. Reine Angew. Math.},
   volume={327},
   date={1981},
   pages={12--80},
   issn={0075-4102},
   review={\MR{631309}},
 }
 
 \bib{Serre:galois-cohomology}{book}{
   author={Serre, Jean-Pierre},
   title={Galois cohomology},
   series={Springer Monographs in Mathematics},
   edition={Corrected reprint of the 1997 English edition},
   publisher={Springer-Verlag, Berlin},
   date={2002},
   pages={x+210},
   isbn={3-540-42192-0},
   review={\MR{1867431}},
}

\bib{Smith:monogeneity}{article}{
   author={Smith, Hanson},
   title={The monogeneity of radical extensions},
   journal={Acta Arith.},
   volume={198},
   date={2021},
   number={3},
   pages={313--327},
   issn={0065-1036},
   review={\MR{4232416}},
 }
 
\bib{Spitler:profinite}{thesis}{
author = {R. F. Spitler},
title = {Profinite Completions and Representations of Finitely Generated Groups},
year = {2019},
note = {PhD thesis},
organization = {Purdue University},
review = {\newline \url{https://www.doi.org/10.25394/PGS.9117068.v1}},
}

\bib{Springer:linear-algebraic}{book}{
   author={Springer, T. A.},
   title={Linear algebraic groups},
   series={Modern Birkh\"{a}user Classics},
   edition={2},
   publisher={Birkh\"{a}user Boston, Inc., Boston, MA},
   date={2009},
   pages={xvi+334},
   isbn={978-0-8176-4839-8},
   review={\MR{2458469}},
 }

 \bib{Weiss:non-rigidity}{article}{
   author={Weiss Behar, Amir Y.},
   title={Profinite non-rigidity of arithmetic groups},
   date={2023},
   journal={e-print},
   note={\arXiv{2302.13266}},
 }
 
\end{biblist}
\end{bibdiv}

\end{document}